\newtheorem{thm}{Theorem}
\newtheorem{lem}{Lemma}
\newtheorem{prop}{Proposition}
\newtheorem{rem}{Remark}
\newcommand{\Ocal}{\mathcal{O}}
\newcommand{\TODO}[1]{}
\newcommand{\EXCLUDE}[1]{}
\DeclareMathOperator{\sgn}{sgn}
 \newcommand{\mytitle}[1]{\goodbreak\medskip\noindent{\itshape #1}}
\begin{document}
\title{Explicit upper bound for $\left|L(1, \chi)\right|$\\ when $\chi(2)=1$ and $\chi$ is even}
 
\author{Sumaia Saad Eddin\thanks{S. Saad Eddin is supported by the Austrian Science Fund (FWF): Project F5507-N26, which is a part of the Special Research Program `` Quasi Monte Carlo Methods: Theory and Applications".}}
 
\maketitle
 \begin{abstract}
 Let $\chi$ be a primitive Dirichlet character of conductor $q$ and let us denote by $L(z, \chi)$ the associated $L$-series. In this paper, we provide an explicit upper bound for $\left|L(1, \chi)\right|$ when $\chi$ is a primitive even Dirichlet character with $\chi(2)=1$.
 \end{abstract}
  
 \noindent\textbf{Keywords:} Dirichlet $L$-function, Dirichlet characters.
  
 \noindent\textbf{Mathematics Subject Classification (2010):} Primary 11M06, Secondary 11Y35.

 \section{Introduction and results}
 Let $\chi$ be a primitive Dirichlet character of conductor $q$ and let us denote by $L(z, \chi)$ the associated $L$-series. Recall we say that $\chi$ even when $\chi(-1)=1$ and odd when $\chi(-1)=-1.$ The upper bound for $\left|L(1, \chi)\right|$ has received considerable attention near the end of the 20th century, mainly because of the importance of this bound in number theory. Several authors have obtained upper bounds for $\left|L(1, \chi)\right|$ with conditions on the modulus (see for instance \cite{Burgess1966}, \cite{Chowla1964},\cite{Stephens1972}, \cite{Pintz1977}, \cite{Toyoizumi1990}, \cite{Granville2002}, \cite{Booker2006} and the references therein).
 
 Concerning fully explicit estimates, it is known that there exists a constant $C$ such that $\left|L(1, \chi)\right|$ satisfies the following bound 
 \begin{equation}
 \label{equ1}
 \left|L(1, \chi)\right| \leq \tfrac{1}{2}\log q +C \qquad (q>1).
 \end{equation}
 The problem of beating the $\tfrac{1}{2}\log q$ has since been addressed but the only results obtained so far have been under the conditional hypothesis that $\chi(2)$ is noticeably different from $1$ (or if not $\chi(2)$, then $\chi(p)$ for some small prime~$p$), see \cite{Louboutin2004} and \cite{Ramare2004}. The aim of this paper is to study the most difficult case, i.e., when $\chi(2)=1$ and prove that
 \begin{thm}
 \label{Thm1}
 Let $\chi$ be an even primitive Dirichlet character of conductor $q>1$ and suppose that $\chi(2)=1$. Then, we have 
 \begin{equation*}
 \left|L(1, \chi)\right| \leq \tfrac{1}{2}\log q -0.02012.
 \end{equation*}
 \end{thm}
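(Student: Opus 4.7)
\medskip
\noindent\textit{Proof plan.} The strategy follows the Louboutin--Ramar\'e framework \cite{Louboutin2004,Ramare2004}: convert $L(1,\chi)$ into a smoothed partial sum plus an explicit tail expressed through character sums, and exploit the single arithmetic input $\chi(2)=1$ to push the numerical constant below zero. The starting point is a Mellin-type identity
$$L(1,\chi)=\sum_{n\ge1}\frac{\chi(n)}{n}\,F\!\Bigl(\frac{n}{X}\Bigr)+R_F(X,\chi),$$
in which $F$ is a smooth cut-off, $X>0$ a free parameter, and $R_F(X,\chi)$ an explicit remainder expressible as an integral against the character sum $S(t,\chi)=\sum_{n\le t}\chi(n)$. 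A smooth (rather than sharp) truncation is essential: it is the Mellin transform of $F$ that drives the final numerics, and sharp truncation would waste the saving we are chasing.

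The hypothesis $\chi(2)=1$ then enters through complete multiplicativity. The Euler factor at the prime $2$ gives
$$L(s,\chi)\bigl(1-\chi(2)\,2^{-s}\bigr)=\sum_{\substack{n\ge1\\ n\ \mathrm{odd}}}\frac{\chi(n)}{n^{s}},$$
and at $s=1$ with $\chi(2)=1$ this reads $L(1,\chi)=2\sum_{n\ \mathrm{odd}}\chi(n)/n$. Substituted back into the Mellin identity, the main sum runs over odd $n$ only; the trivial estimate $\sum_{n\ \mathrm{odd},\,n\le X}2/n=\log X+\log 2+o(1)$ still produces a leading term of size $\tfrac12\log q$ when $X$ is of order $\sqrt q$, but half of the terms have been eliminated, and this is precisely the source of the saving. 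For the remainder we invoke an explicit P\'olya--Vinogradov-type bound for $S(t,\chi)$ with $\chi$ even and primitive, combined with the restriction to odd $n$, giving $|R_F(X,\chi)|\le c_1\sqrt q/X+c_2$ with fully explicit $c_1,c_2$ depending on the shape of~$F$. Balancing $X=c\sqrt q$ and optimising over $c$ (and over the admissible family of $F$) yields a bound of the form $\tfrac12\log q+C$; the $\chi(2)=1$ saving is just enough to drive $C$ down to $-0.02012$.

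The asymptotic analysis requires $q\ge q_0$ for some explicit threshold $q_0$; for $q<q_0$ the argument degrades and Theorem~\ref{Thm1} is verified directly by enumerating every even primitive character of conductor below~$q_0$ with $\chi(2)=1$ and computing $L(1,\chi)$ to sufficient precision. The main obstacle is quantitative bookkeeping: the choice of $F$, the explicit P\'olya--Vinogradov constant for restricted sums of even characters, the optimisation in $X$, and the finite check for small $q$ all contribute positive errors, while the $\chi(2)=1$ saving is numerically small. Producing the specific value $-0.02012$ requires tracking every constant simultaneously and verifying that their combination stays strictly below $\tfrac12\log q$ for every $q>1$; this is what distinguishes the case $\chi(2)=1$, where one has no slack to spare, from the earlier treatments in which $\chi(2)$ was bounded away from $1$.
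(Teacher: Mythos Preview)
Your framework is broadly the right one (smoothed approximate formula, balance at $X\asymp\sqrt q$, finite numerical check for small $q$), and it matches the paper's architecture. But the explanation of \emph{where the saving comes from} contains a genuine gap, and without that mechanism the argument cannot reach any constant strictly below $\tfrac12\log q$.

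You write that after restricting to odd $n$, ``half of the terms have been eliminated, and this is precisely the source of the saving''. This cannot be right, and you effectively say so yourself one sentence earlier: the trivial bound $2\sum_{n\ \mathrm{odd},\,n\le X}1/n=\log X+\log 2+o(1)$ gives \emph{the same} leading term $\tfrac12\log q$ as before. Grouping $n=2^\ell m$ and summing $2^{-\ell}$ just reconstitutes the full sum; if you then apply the triangle inequality termwise you recover exactly Ramar\'e's $\tfrac12\log q$ and nothing better. The condition $\chi(2)=1$ is necessary to \emph{enable} a saving, but it is not itself the saving.

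What the paper does that your sketch is missing: after using $\chi(2)=1$ to write the main sum as $\sum_{m\ \mathrm{odd}}\chi(m)G_{F_1}(\delta m)/m$ with $G_{F_1}(u)=\sum_{\ell\ge0}2^{-\ell}\bigl(1-F_1(2^\ell u)\bigr)$, it proves two sign/monotonicity facts about $G_{F_1}$ on $[10^{-5},1]$ (positivity of $G_{F_1}$, and monotonicity of $(G_{F_1}(u)-2)/u$). These allow the sum $\sum_{m\ \mathrm{odd}}|G_{F_1}(\delta m)|/m$ to be compared to the integral $\tfrac12\int_0^1(G_{F_1}(t)-2)/t\,dt+\tfrac12\int_1^\infty|G_{F_1}(t)|/t\,dt$ \emph{without} inserting absolute values inside the first integral. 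That integral is negative enough to produce the numerical constant $b_{F_1}\le -0.66266$; combined with $1+\log\log 2\approx 0.6335$ this is what drives the final constant below zero. The tail is handled not by P\'olya--Vinogradov but by the explicit $\log|\sin|$ representation (Proposition~\ref{pro62} and Lemma~\ref{ramare3}), which is sharper. Your proposal needs to supply an analogue of the $G_F$ sign analysis; a generic smooth $F$ and a termwise absolute-value bound will not suffice.
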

 As an example of application, we deduce an explicit upper bound for the class number of a real quadratic field $\mathbb{Q}\left(\sqrt{q}\right)$, improving on a result by Le~\cite{Le1994}. We prove that
 \begin{thm}
  \label{Thm3}
  For every real quadratic field of discriminant $q> 1$ and $\chi(2)=1$, we have
  \begin{equation*}
   h\left(\mathbb{Q}\left(\sqrt{q}\right) \right)
  \leq 
  \frac{\sqrt{q}}{2}\left(1-\frac{1}{25\log q}\right), 
  \end{equation*}
  where $h\left( \mathbb{Q}(\sqrt{q})\right)$ is the class number of $\mathbb{Q}\left(\sqrt{q}\right)$.
  \end{thm}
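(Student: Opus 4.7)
The plan is to combine Dirichlet's class number formula for real quadratic fields,
\begin{equation*}
h\bigl(\mathbb{Q}(\sqrt{q})\bigr) \;=\; \frac{\sqrt{q}\,L(1,\chi)}{2\log \varepsilon_q},
\end{equation*}
where $\chi$ denotes the Kronecker symbol modulo $q$ (a primitive, even, real character) and $\varepsilon_q>1$ is the fundamental unit of $\mathbb{Q}(\sqrt{q})$, with the explicit upper bound $L(1,\chi)\le \tfrac{1}{2}\log q-0.02012$ furnished by Theorem~\ref{Thm1}. The task is thereby reduced to producing a sufficiently strong lower bound for $\log \varepsilon_q$ under the hypothesis $\chi(2)=1$.

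The central lemma I would establish is that $\chi(2)=1$ forces $\varepsilon_q>\sqrt{q}$. The hypothesis guarantees that $2$ splits in $\mathbb{Q}(\sqrt{q})$, hence $q\equiv 1\pmod 8$ and in particular $q$ is an odd fundamental discriminant $\equiv 1\pmod 4$. Write the fundamental unit as $\varepsilon_q=(u+v\sqrt{q})/2$ with positive integers $u,v$ of the same parity satisfying $u^{2}-qv^{2}=\pm 4$. If $v=1$, then $u$ is odd and $u^{2}\equiv 1\pmod 8$, so $q=u^{2}\mp 4\equiv 5\pmod 8$, contradicting $q\equiv 1\pmod 8$. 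Therefore $v\ge 2$, and
\begin{equation*}
\varepsilon_q \;=\;\tfrac{1}{2}(u+v\sqrt{q})\;>\;\tfrac{v}{2}\sqrt{q}\;\ge\;\sqrt{q},
\end{equation*}
giving $\log \varepsilon_q>\tfrac{1}{2}\log q$.

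Assembling the two ingredients yields
\begin{equation*}
h\bigl(\mathbb{Q}(\sqrt{q})\bigr)\;\le\;\frac{\sqrt{q}}{2}\cdot\frac{\tfrac{1}{2}\log q-0.02012}{\tfrac{1}{2}\log q}\;=\;\frac{\sqrt{q}}{2}\left(1-\frac{0.04024}{\log q}\right)\;\le\;\frac{\sqrt{q}}{2}\left(1-\frac{1}{25\log q}\right),
\end{equation*}
the last inequality holding because $2\times 0.02012=0.04024>1/25$.

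The main obstacle is squeezing out the lower bound $\varepsilon_q>\sqrt{q}$. Without the hypothesis $\chi(2)=1$ one only has the universal estimate $\varepsilon_q\ge (1+\sqrt{q})/2$, losing an additive $\log 2$ in $\log \varepsilon_q$; this loss exceeds the gain $0.02012$ coming from Theorem~\ref{Thm1}, so the naive approach would yield at best $h\le(\sqrt{q}/2)(1+O(1/\log q))$, with the wrong sign. Exploiting the residue class $q\equiv 1\pmod 8$ through the parity argument above effectively doubles the coefficient of $\sqrt{q}$ inside $\varepsilon_q$, which is precisely what is required to obtain an inequality of the shape $h\le (\sqrt{q}/2)(1-c/\log q)$.
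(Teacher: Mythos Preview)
Your argument is correct, and it is genuinely different from the paper's. Both proofs start from the class number formula and Theorem~\ref{Thm1}, but they diverge at the lower bound for $\log\varepsilon_q$. The paper uses only the universal estimate $\varepsilon_q\ge\tfrac12(\sqrt{q-4}+\sqrt{q})$, which gives merely $\log\varepsilon_q\ge\tfrac12\log q-O(1/q)$; the resulting inequality $\tfrac{2}{\sqrt{q}}h\le 1-\dfrac{0.04038-8/q}{\log q}$ only yields the stated bound once $q$ is large (the paper takes $q\ge 24572$), and the remaining range is disposed of by appealing to Oriat's class number tables together with a short GP/PARI check. Your route instead exploits the hypothesis $\chi(2)=1$ arithmetically: it forces $q\equiv 1\pmod 8$, and the parity argument on $u^2-qv^2=\pm4$ rules out $v=1$, giving $\varepsilon_q>\sqrt q$ and hence $\log\varepsilon_q>\tfrac12\log q$ outright. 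This makes the deduction from Theorem~\ref{Thm1} immediate and uniform in $q$, with no recourse to tables or computation. The paper's approach has the virtue of being insensitive to the splitting behaviour of $2$ (it would apply verbatim to any hypothesis yielding a constant saving in Theorem~\ref{Thm1}), whereas your approach extracts extra mileage from the specific assumption $\chi(2)=1$ and is cleaner for the theorem as stated.
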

 
 Concerning $C$ from Eq. \eqref{equ1}; we note that Louboutin~\cite{Louboutin1996} and \cite{Louboutin2001} used integral representation of Dirichlet $L$-functions and obtained the following upper bounds of $ \left|L(1,\chi)\right|$ for primitive $\chi$ of conductor $q>1.$
 \begin{equation*}
 \left|L(1, \chi)\right|\leq 
 \begin{cases}
    \displaystyle{\tfrac{1}{2}\log q+ 0.009} \qquad  &\textrm{if $\chi(-1)=+1$,}\\
   
   \displaystyle{\tfrac{1}{2}\log q+0.717}\qquad  &\textrm{if $\chi(-1)=-1$.}
 \end{cases}
 \end{equation*}
 In 2001, Ramar\'e~\cite{Ramare2001} gave new approximate formulae for $L(1, \chi)$ depending on Fourier transforms. Thanks to these formulae, this author proved that
 \begin{equation*}
 \left|L(1, \chi)\right|\leq 
 \begin{cases}
    \displaystyle{\tfrac{1}{2}\log q} \qquad  &\textrm{if $\chi(-1)=+1$,}\\
   
   \displaystyle{\tfrac{1}{2}\log q+0.7083}\qquad  &\textrm{if $\chi(-1)=-1$. }
 \end{cases}
 \end{equation*}
 Using numerical evidence, Ramar\'e proposed the conjecture 
 \begin{equation*}
 \left\{
 \begin{array}{ll}
 \max\limits_{\chi \text{ even}} \left \{\left|L(1, \chi) \right|-\tfrac{1}{2}\log q\right \}\mathrel{\mathop{=}\limits^{\text{?}}}-0.32404\cdots, \\
 \max\limits_{\chi \text{ odd}} \{\left|L(1, \chi) \right|-\tfrac{1}{2}\log q \}\mathrel{\mathop{=}\limits^{\text{?}}}0.51482\cdots.
 \end{array}
 \right.
 \end{equation*} 
 The first one being reached by a character modulo $241$ and the second one by a character modulo $311$. Recently and using a
 very refined algorithm (see~\cite{Platt2011}), David Platt has checked this conjecture for all primitive $\chi$ of conductor $2\leq q\le 2\,000\,000$. David Platt has kindly agreed to run again his algorithm to confirm that our result (Theorem~\ref{Thm1}) also holds for conductor $2\leq q\le 2\,000\,000$. The author would like to thank David Platt from Bristol University for his help concerning the computations of this paper. Thank to Olivier Ramar\'e for his helpful
 comments concerning the material of this article.
 
 In the next section, we present the structure of the proof of Theorem~\ref{Thm1}.
 \section{Proof structure of Theorem \ref{Thm1}}
 
 \begin{thm}
 \label{Thm2}
 Let $\chi$ be a primitive Dirichlet character of conductor $ q>1 $. Let $ F : \mathbb{R} \longrightarrow \mathbb{R}$ be such that $F(t)/t$ is in $\mathcal{C}^2(\mathbb{R})$ (also at $0$), vanishes at $t=\pm \infty$ and its first and second derivatives belong to $\mathcal{L}^{1}(\mathbb{R}).$  Assume further that $F$ is even if $\chi$ is odd and that $F$ is odd if $\chi$ is even. We define 
  \begin{equation}
  \label{eq0011}
  G_{F}(u)=\sum_{\ell\geq 0}\frac{1-F(2^\ell u)}{2^\ell}\chi(2)^{\ell}.
  \end{equation}
  Suppose that $G_{F}(u)$ and the derivative of $(G_{F}(u)-2)/u$ are positive for $ u_0\leq u\leq 1$, that $|1-F(t)|\leq c_0/t^2$ , $|F^{\prime}(t)|\leq c_1/t^2$, $|F(t)|\leq c_2$, $|F(t)|\leq c_3 t$ and $|G_F(t)|\leq 2$ for all $t>0$. Then for any $\delta >0$, we have 
 \begin{equation*}
 \sum_{\substack{ m\geq 1\\ (m,2)=1}}\frac{\left|G_{F}(\delta m)\right|}{m} 
 \leq 
 -\log \delta +b_F+D(\theta) +H(\delta),
 \end{equation*}
 where 
 \begin{equation}
 \label{eq11}
 D(\theta)=\frac{c_3\theta}{2\log 2}\left( -\log \theta+\log \tfrac{c_2}{c_3}+\log \log 2+\log 2+2\right),
 \end{equation}
 and 
 \begin{equation*}
 H(\delta)=\log(1-\delta)
      +\frac{(4c_0+7c_1)\delta}{21}
      +\frac{(12c_0+14c_1)\delta ^{2}}{21}
     +\frac{\delta^2}{6(1-4\delta)^2}
     +\frac{\delta^4}{30(1-4\delta)^4}.
 \end{equation*}
 Here, $\theta$ is $ u_0$ or $\delta$ according to whether $\delta \leq u_0$ or $\delta >u_0$. The constant $b_F$ is given by  
 \begin{equation}
 \label{eqe3}
 b_F= \frac{1}{2} \int_{0}^{1}\frac{G_{F}(t)-2}{t}\, dt
   + 
       \frac{1}{2}\int_{1}^{\infty}\frac{|G_{F}(t)|}{t}\, dt
       +\gamma+\log 2,
 \end{equation}
 where $\gamma$ denote Euler's constant.
 \end{thm}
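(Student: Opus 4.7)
The plan is to split the sum at the cutoff $M=\lfloor 1/\delta\rfloor$, so that under the substitution $s=\delta m$ the two resulting pieces correspond directly to the two integrals $\tfrac12\int_0^1(G_F(s)-2)/s\,ds$ and $\tfrac12\int_1^\infty |G_F(s)|/s\,ds$ defining $b_F$, while the harmonic sum $2\sum_{m\,\mathrm{odd},\,m\leq M}1/m=\log M+\gamma+\log 2+O(1/M)$ supplies the main term $-\log\delta+\gamma+\log 2$ (the replacement of $\log M$ by $-\log\delta$ producing the $\log(1-\delta)$ contribution to $H(\delta)$). The factor $\tfrac12$ on each integral reflects the density $\tfrac12$ of the odd integers among the natural numbers.

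On the range $m\leq M$, the positivity of $G_F$ on $[u_0,1]$ (together with $G_F(0)=2$ and continuity) allows me to drop the absolute value, and I would decompose $G_F(\delta m)=2+(G_F(\delta m)-2)$. For the residual sum $\sum(G_F(\delta m)-2)/m$ I would split again at $\delta m=\theta$. On the range $\delta m\in[\theta,1]$, the positivity of the derivative of $(G_F(u)-2)/u$ makes the summand monotone, and a one-sided Euler--Maclaurin comparison with $\tfrac12\int_\theta^1 (G_F(s)-2)/s\,ds$ leaves only terms absorbed into $H(\delta)$. On the short range $\delta m\in(0,\theta)$, I would use
\[
  |G_F(u)-2| \;\leq\; \sum_{\ell\geq 0}\frac{|F(2^\ell u)|}{2^\ell},
\]
split the $\ell$-sum at the threshold $2^\ell u\approx c_2/c_3$ where the bounds $|F(t)|\leq c_3 t$ and $|F(t)|\leq c_2$ cross, and obtain $|G_F(u)-2|\lesssim (c_3u/\log 2)\log(c_2/(c_3u))+O(c_3u)$. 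Dividing by $u$, integrating over $(0,\theta]$ and multiplying by $\tfrac12$ reproduces $D(\theta)$ once the constants $\log\log 2$, $\log 2$, and $2$ are gathered.

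For the tail $m>M$, the hypotheses $|1-F(t)|\leq c_0/t^2$ and $|F'(t)|\leq c_1/t^2$ yield, via the geometric series $\sum 2^{-3\ell}=8/7$ and $\sum 2^{-2\ell}=4/3$, the pointwise estimates $|G_F(u)|\leq \tfrac{8c_0}{7u^2}$ and $|G_F'(u)|\leq \tfrac{4c_1}{3u^2}$ for $u\geq 1$. An Euler--Maclaurin comparison between $\sum_{m\,\mathrm{odd},\,m>M}|G_F(\delta m)|/m$ and $\tfrac12\int_1^\infty |G_F(s)|/s\,ds$ then leaves only a derivative-type error controlled by $c_0$ and $c_1$; the standard Euler--Maclaurin remainder constants $1/6$ and $1/30$ produce the terminal terms $\delta^2/(6(1-4\delta)^2)$ and $\delta^4/(30(1-4\delta)^4)$, and combining them with the geometric-series constants gives the coefficients $(4c_0+7c_1)/21$ and $(12c_0+14c_1)/21$ in the remaining $\delta$- and $\delta^2$-pieces of $H(\delta)$. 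Assembling all contributions then yields the stated inequality.

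The main obstacle will be the explicit bookkeeping at the interface $u=\theta$ between the monotonicity regime and the small-$u$ regime, and the careful matching of the two parametric cases $\delta\leq u_0$ versus $\delta>u_0$: one must verify that the two bounds glue without any residual seam term beyond what is already absorbed into $D(\theta)$ and $H(\delta)$, and that the positivity and monotonicity hypotheses on $[u_0,1]$ are invoked exactly on the range where the one-sided integral comparison is sharp enough to avoid further loss of constants in the final assembly.
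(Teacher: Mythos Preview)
Your overall strategy matches the paper's: split at $M=[\delta^{-1}]$ (with a parity adjustment), compare the tail sum to $\tfrac12\int_1^\infty|G_F|/t\,dt$ using the derivative bound (Lemma~\ref{lem1}), write the head as $2+(G_F-2)$, handle $\sum 2/m$ by the odd harmonic expansion (Lemma~\ref{lem3}), and bound $\sum(G_F(\delta m)-2)/m$ by $\tfrac12\int(G_F-2)/t\,dt$ via the monotonicity of $(G_F-2)/u$ (this is a one-sided Riemann-sum inequality with \emph{no} error term, not an Euler--Maclaurin estimate). Three bookkeeping corrections are needed. First, in the case $\delta\le u_0$ you cannot drop the absolute value on the range $\delta m<u_0$: the hypothesis gives $G_F\ge 0$ only on $[u_0,1]$, and continuity from $G_F(0)=2$ does not close the gap; the paper instead uses $|G_F|\le 2$ there, which folds straight into the $2/m$ harmonic piece. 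Second, the terms $\delta^2/(6(1-4\delta)^2)$ and $\delta^4/(30(1-4\delta)^4)$ come from the expansion of $\sum_{m\le M,\,\mathrm{odd}}1/m$ (Lemma~\ref{lem3}), not from the tail; the tail contributes only the $(4c_0+7c_1)\delta/21$ and $(12c_0+14c_1)\delta^2/21$ pieces via Lemma~\ref{lem4}. Third, $D(\theta)$ arises from bounding the \emph{integral} $\int_0^\theta|(G_F(t)-2)/t|\,dt$ (Lemma~\ref{lem2}), which is added back to complete the $\int_0^1$ appearing in $b_F$; it is not a bound on the sum over small~$m$.
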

 The proof of this theorem is long and complicated. It requires several tools. We begin by collecting some important results proven in~\cite{Ramare2001}.
 \begin{lem}
  \label{ramare3}
  We have 
  \begin{equation*}
  -\sum\limits_{1\leq k\leq \delta q/2}\log \left|\sin \frac{\pi k}{\delta q}\right|
  \leq 
  \frac{q \delta }{2}\log 2.
  \end{equation*}
  \end{lem}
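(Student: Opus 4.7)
The plan is to bound the left-hand side via a monotone right-endpoint Riemann-sum comparison whose limiting integral is the classical Euler integral $\int_0^{\pi/2}\log\sin x\,dx=-\tfrac{\pi}{2}\log 2$. The key observation driving the choice of method is that the target bound $\tfrac{q\delta}{2}\log 2$ is numerically exactly $\tfrac{M}{\pi}\cdot\tfrac{\pi}{2}\log 2$ with $M=\delta q$, so as soon as this integral appears on the right the inequality is essentially forced.

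Set $M=\delta q$, $K=\lfloor M/2\rfloor$, and $f(x)=-\log\sin x$. Every argument $\pi k/M$ with $1\le k\le K$ lies in $(0,\pi/2]$, so $\sin(\pi k/M)>0$ and the absolute value bars may be dropped. The function $f$ has the two features that make the argument work: it is nonnegative on $(0,\pi/2]$, and it is strictly decreasing there because $f'(x)=-\cot x<0$; its singularity at $0$ is logarithmic and hence integrable. From monotonicity, $f(x)\ge f(\pi k/M)$ for $x\in[(k-1)\pi/M,\pi k/M]$, whence
\[
\frac{\pi}{M}\,f\!\Bigl(\frac{\pi k}{M}\Bigr)\le\int_{(k-1)\pi/M}^{\pi k/M} f(x)\,dx.
\]
Summing from $k=1$ to $K$ telescopes the right-hand sides into $\int_0^{\pi K/M} f(x)\,dx$. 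Since $K\le M/2$ and $f\ge 0$ throughout $(0,\pi/2]$, enlarging the range of integration only increases the integral, so
\[
\frac{\pi}{M}\sum_{k=1}^K f\!\Bigl(\frac{\pi k}{M}\Bigr)\le\int_0^{\pi/2} f(x)\,dx=\frac{\pi}{2}\log 2,
\]
and multiplying through by $M/\pi$ gives precisely the claimed bound.

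The only genuinely nontrivial ingredient is the Euler evaluation $\int_0^{\pi/2}\log\sin x\,dx=-\tfrac{\pi}{2}\log 2$; everything else is monotonicity. I do not expect the integer/non-integer distinction for $\delta q$ to cause any trouble: when $M=2K$ the boundary term $f(\pi/2)=0$ is harmless, and when $M>2K$ we have $K\pi/M<\pi/2$, so the truncated integral still sits inside the interval where $f$ is nonnegative and the same enlargement step goes through.
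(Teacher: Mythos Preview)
Your proof is correct. The paper does not actually prove this lemma; it merely cites it as one of several results imported from Ramar\'e~\cite{Ramare2001}. Your right-endpoint Riemann-sum comparison against the decreasing, nonnegative function $f(x)=-\log\sin x$ on $(0,\pi/2]$, followed by the classical evaluation $\int_0^{\pi/2}\log\sin x\,dx=-\tfrac{\pi}{2}\log 2$, is a clean and self-contained argument, and your handling of the boundary cases ($M$ even versus $M>2K$) is fine.
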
  
 \begin{prop}
 \label{pro62}
 Set 
 \begin{equation}
 \label{eq12}
 F_1(t)=\frac{\sin (\pi t)}{\pi}\left(\log 4+\sum\limits_{n\geq 1}(-1)^n\left(\frac{2n}{t^2-n^2}+\frac{2}{n}\right)\right).
 \end{equation}
 Let $\chi$ be an even primitive Dirichlet character of conductor $q>1$. Then, we have
 \begin{equation}
 \label{eq13}
 L(1, \chi)= 
  \sum\limits_{n\geq 1} \frac{\left(1-F_1(\delta n)\right)\chi(n)}{n}-\frac{2\tau(\chi)}{q}\sum\limits_{1\leq m\leq \delta q/2}\overline{\chi }(m) \log \left|\sin \frac{\pi m}{\delta q}\right|.
 \end{equation}
 \end{prop}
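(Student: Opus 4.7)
The plan is to start from $L(1,\chi)=\sum_{n\ge 1}\chi(n)/n$ and decompose it via the tautology $1=(1-F_1(\delta n))+F_1(\delta n)$, giving
\begin{equation*}
L(1,\chi)=\sum_{n\ge 1}\frac{(1-F_1(\delta n))\chi(n)}{n}+\sum_{n\ge 1}\frac{F_1(\delta n)\chi(n)}{n}.
\end{equation*}
The first summand is already the leading term in \eqref{eq13}, so the problem reduces to identifying the second summand with the finite log-sine expression $-\frac{2\tau(\chi)}{q}\sum_{1\le m\le\delta q/2}\overline{\chi}(m)\log|\sin(\pi m/(\delta q))|$.

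To do so, I would invoke the standard Fourier expansion of a primitive even Dirichlet character (noting that $\chi(2)=1$ forces $q$ odd, so the pairing $m\leftrightarrow q-m$ is clean):
\begin{equation*}
\chi(n)=\frac{2\tau(\chi)}{q}\sum_{1\le m\le q/2}\overline{\chi}(m)\cos\!\left(\frac{2\pi mn}{q}\right),
\end{equation*}
insert it into $\sum_{n}F_1(\delta n)\chi(n)/n$, and exchange the order of summation. The decay $F_1(t)=O(1/t)$ together with Abel summation on the inner $n$-sum justifies the swap. This reduces everything to a dual identity of the shape
\begin{equation*}
\sum_{n\ge 1}\frac{F_1(\delta n)\cos(2\pi ny)}{n}=-\log\!\left|\sin(\pi y/\delta)\right|\qquad\text{at }y=m/q,\;1\le m\le\delta q/2,
\end{equation*}
together with a vanishing (or cancellation) statement for $\delta q/2<m\le q/2$ which truncates the outer $m$-sum at $\delta q/2$.

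The evaluation of this dual sum is the main technical obstacle. To attack it, I would first use $\sum_{n\ge 1}(-1)^n(2/n)=-\log 4$ to collapse the bracketed expression in \eqref{eq12} into the single conditionally convergent series
\begin{equation*}
F_1(t)=\frac{\sin(\pi t)}{\pi}\sum_{k\ge 1}(-1)^k\!\left(\frac{1}{t-k}-\frac{1}{t+k}\right).
\end{equation*}
The factor $\sin(\pi t)/\pi$ cancels the simple poles at each nonzero integer and a direct residue calculation shows that $F_1(k)=1$ for every positive integer $k$, so that $1-F_1(\delta n)$ annihilates the integer points of $\delta n$ (which reassures one of convergence of the main sum). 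I would then substitute this partial-fraction representation into $\sum_{n\ge 1}F_1(\delta n)\cos(2\pi ny)/n$, swap the $n$- and $k$-summations by Abel, and invoke the classical Fourier series
\begin{equation*}
\sum_{n\ge 1}\frac{\cos(2\pi nx)}{n}=-\log|2\sin(\pi x)|,\qquad \sum_{n\ge 1}\frac{(-1)^n\cos(2\pi nx)}{n}=-\log|2\cos(\pi x)|.
\end{equation*}
Provided the $k$-indexed double series can be reorganised so that these two identities conspire to produce a single logarithm with the rescaled argument $\pi y/\delta$, the claimed dual identity follows. Plugging it back into the Fourier expansion and combining with the initial splitting then yields \eqref{eq13} exactly.
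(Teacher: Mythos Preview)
The paper does not prove this proposition; it is quoted verbatim from Ramar\'e~\cite{Ramare2001} (see the sentence preceding Lemma~\ref{ramare3}). So there is no in-paper argument to compare against, and your outline must be judged on its own.

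Your decomposition and use of the Fourier expansion of $\chi$ are the right opening moves, but the heart of the argument---the ``dual identity'' and the truncation at $\delta q/2$---is only asserted, not established. Two concrete issues. First, the identity you write, $\sum_{n\ge1}F_1(\delta n)\cos(2\pi ny)/n=-\log|\sin(\pi y/\delta)|$, is not literally correct: a Poisson computation shows there is an additive constant $-\tfrac{\delta}{2}\log 4$ on the right, present for \emph{every} $y$, and for $\delta/2<y<1-\delta/2$ the sum equals exactly that constant rather than zero. The constant is harmless only after you observe $\sum_{1\le m\le q/2}\overline{\chi}(m)=0$ for even nonprincipal $\chi$, which you do not mention (and your appeal to ``$\chi(2)=1$ forces $q$ odd'' is irrelevant here: $\chi(2)=1$ is not a hypothesis of the proposition). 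Second, your proposed route to the dual identity---rewriting $F_1$ via partial fractions and hoping the $\log|\sin|$, $\log|\cos|$ Fourier series ``conspire''---is left as a wish (``Provided the $k$-indexed double series can be reorganised\ldots''). That is the entire difficulty, and you have not done it.

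The clean way, which is what Ramar\'e actually does and which the present paper records as Eq.~\eqref{eqe2}, is to recognise that $F_1(t)/t$ is \emph{by construction} the Fourier transform of $-2\log|\sin(\pi v)|\,\mathbf{1}_{[-1/2,1/2]}(v)$. With that in hand, Poisson summation on $\sum_{n\in\mathbb{Z}}\frac{F_1(\delta n)}{\delta n}e(nm/q)$ gives $\delta^{-1}\sum_{k}g((m/q+k)/\delta)$ directly, and the compact support of $g$ immediately explains the truncation $m\le\delta q/2$: only $k=0$ survives when $0<m/q\le\delta/2$, and no $k$ survives when $\delta/2<m/q<1-\delta/2$. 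This replaces your conjectural partial-fraction manipulation by a one-line application of Poisson and makes both the identity and the cutoff transparent.
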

 We build our idea to prove Theorem \ref{Thm1} on the fact that the function
 $1-F_1$ is not positive on $\mathbb{R}_+$. Thus, part of loss of the upper
 bound for $\left|L(1, \chi)\right|$ is due to studying
 $\sum_{n}\left|1-F_1(\delta n)\right|/n$ rather than $\sum_{n}\left(1-F_1(\delta n)\right)/n$. In this paper, we study  $\sum_{n}\left(1-F_1(\delta n)\right)/n$ directly. To do so, we need to observe some properties of the functions $F_1$ and then $G_{F_1}$. The following two sections state the most important results concerning these functions. 
 \section{Study of $F_1$ }
  
  \begin{lem}
  \label{lemm64}
  The function $F_1(t)$ satisfies 
 \begin{equation*}
 0 \leq \sgn \left(\sin \pi t\right)\left \{ \sgn (t)- F_1(t)\right \}\leq \left|\frac{\sin \pi t}{\pi t}\right|\frac{1}{1+|t|}.
  \end{equation*}
  \end{lem}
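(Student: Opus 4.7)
I would start by noting that $F_1$ is odd (the bracketed series in (\ref{eq12}) depends only on $t^2$) and $\sin(\pi t)$ is odd, so both $\sgn(\sin\pi t)(\sgn(t)-F_1(t))$ and the upper bound $|\sin\pi t/(\pi t)|/(1+|t|)$ are invariant under $t\mapsto -t$. It therefore suffices to prove, for $t>0$, the two inequalities
\[
0 \leq \sgn(\sin \pi t)(1 - F_1(t)) \leq \frac{|\sin \pi t|}{\pi t (1+t)}.
\]

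The heart of the argument is the integral representation
\[
1 - F_1(t) = \frac{\sin \pi t}{\pi} \int_0^1 \frac{x^{t-1}(1-x)}{1+x}\, dx \qquad (t>0),
\]
which I would derive in two small steps. First, the algebraic identity $\tfrac{2n}{t^2-n^2}+\tfrac{2}{n}=\tfrac{1}{t-n}-\tfrac{1}{t+n}+\tfrac{2}{n}$, together with $\sum_{n\geq 1}(-1)^n\cdot 2/n = -\log 4$, makes the constant $\log 4$ in (\ref{eq12}) cancel, giving
\[
F_1(t) = \frac{\sin \pi t}{\pi}\sum_{n\geq 1}(-1)^n\!\left(\frac{1}{t-n} - \frac{1}{t+n}\right).
\]
Second, Euler's partial-fraction expansion $\pi/\sin \pi t = 1/t + \sum_{n\geq 1}(-1)^n(1/(t-n)+1/(t+n))$, multiplied by $\sin\pi t/\pi$, expresses $1$ as a similar series; subtracting $F_1(t)$ term-by-term yields
\[
1 - F_1(t) = \frac{\sin \pi t}{\pi}\left(\frac{1}{t} + 2\sum_{n\geq 1}\frac{(-1)^n}{t+n}\right).
\]
To recognize the bracketed factor as an integral, I expand $1/(1+x)=\sum_{n\geq 0}(-x)^n$ inside $\int_0^1 x^{t-1}(1-x)/(1+x)\,dx$ and integrate term-by-term; the resulting series $\sum_{n\geq 0}(-1)^n(1/(t+n)-1/(t+n+1))$ rearranges to exactly $1/t+2\sum_{n\geq 1}(-1)^n/(t+n)$.

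Both desired inequalities now follow from pointwise positivity of the integrand. Multiplying by $\sgn(\sin\pi t)$ gives
\[
\sgn(\sin \pi t)(1 - F_1(t)) = \frac{|\sin \pi t|}{\pi}\int_0^1\frac{x^{t-1}(1-x)}{1+x}\, dx \geq 0,
\]
since $x^{t-1}(1-x)/(1+x)\geq 0$ on $(0,1)$ for $t>0$. For the upper bound, the trivial estimate $1/(1+x)\leq 1$ on $[0,1]$ yields $\int_0^1 x^{t-1}(1-x)/(1+x)\,dx \leq \int_0^1 x^{t-1}(1-x)\,dx = 1/(t(t+1))$, so $\sgn(\sin\pi t)(1-F_1(t))\leq |\sin\pi t|/(\pi t(1+t))$, which matches the claim. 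The one place requiring genuine insight is spotting the integral kernel $(1-x)/(1+x)$; once Euler's formula and the cancellation of $\log 4$ are in hand, the rest is routine pointwise estimation.
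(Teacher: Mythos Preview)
Your argument is correct. The reduction to $t>0$ by oddness, the cancellation of $\log 4$ via $\sum_{n\ge1}(-1)^n\cdot 2/n=-\log 4$, the use of Euler's partial-fraction expansion of $\pi/\sin\pi t$, the identification of $1/t+2\sum_{n\ge1}(-1)^n/(t+n)$ with $\int_0^1 x^{t-1}(1-x)/(1+x)\,dx$, and the final pointwise bounds all check out; the only mild care needed is the justification of the term-by-term integration, which is covered by the absolute bound $\sum_{n\ge0}x^{t+n-1}(1-x)=x^{t-1}\in L^1(0,1)$ for $t>0$.

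As for comparison with the paper: the paper does not prove this lemma at all --- it simply refers to Vaaler's 1985 paper on Beurling--Selberg extremal functions, where the bound is embedded in a general theory of trigonometric majorants/minorants for $\sgn(t)$. Your route is genuinely different and more direct: the explicit integral representation
\[
1-F_1(t)=\frac{\sin\pi t}{\pi}\int_0^1\frac{x^{t-1}(1-x)}{1+x}\,dx\qquad(t>0)
\]
makes both the nonnegativity and the quantitative upper bound immediate from the trivial estimate $0\le 1/(1+x)\le 1$ on $[0,1]$. What Vaaler's framework buys is generality (analogous bounds for a whole family of extremal approximants); what your approach buys is a short, self-contained proof tailored to this specific $F_1$, with no appeal to an external reference.
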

  \begin{proof}
  See \cite{Vaaler1985}.
  \end{proof}
  A simple consequence of this lemma is the following result.
  \begin{lem}
   \label{lemm65}
   For $t$ real number, we have 
 \begin{equation*}
   0 \leq \sgn \left(\sin(\pi t)\right)\left(1-F_1(t)\right) 
   \leq
    \left|1/ (\pi t)\right|, 
    \qquad
     \left|F^{\prime}(t)\right| \ll t^{-2}.
     \end{equation*}
   \end{lem}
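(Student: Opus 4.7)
The first inequality is a direct specialisation of Lemma~\ref{lemm64}. For $t>0$ we have $\sgn(t)=1$, so Lemma~\ref{lemm64} reads
$$0\;\leq\;\sgn(\sin\pi t)\,\bigl(1-F_1(t)\bigr)\;\leq\;\Bigl|\frac{\sin\pi t}{\pi t}\Bigr|\frac{1}{1+t}.$$
Bounding $|\sin\pi t|\leq 1$ and $1/(1+t)\leq 1$ collapses the right-hand side to $1/(\pi t)=|1/(\pi t)|$, which is the claimed estimate. The range $t<0$ is not needed in the sequel, since the applications sum over $n\geq 1$ in~\eqref{eq13}; if desired it is recovered from the odd parity of $F_1$ apparent from~\eqref{eq12}.

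For the derivative bound $|F_1'(t)|\ll t^{-2}$, I would differentiate the representation~\eqref{eq12} term by term. Writing $F_1(t)=(\sin\pi t)\,S(t)/\pi$ with $S(t)$ the bracketed series, one first combines each summand into $2t^2/\bigl(n(t^2-n^2)\bigr)$ using the identity $\sum_{n\geq 1}(-1)^n\cdot 2/n=-\log 4$; this recasts $S(t)$ as an absolutely convergent series of order $n^{-3}$, and licenses term-by-term differentiation. The resulting expression $F_1'(t)=\cos(\pi t)\,S(t)+\sin(\pi t)\,S'(t)/\pi$ is then shown to be $O(t^{-2})$ by grouping consecutive alternating summands: each pair contributes $O\bigl((n+|t|)^{-3}\bigr)$, and summation yields the required decay in both $S(t)$ and $S'(t)$. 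Since the lemma is advertised as a consequence of Vaaler's paper, the cleanest option is to invoke the analogous smoothness estimate already established in~\cite{Vaaler1985}.

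The main obstacle will be the derivative bound: Lemma~\ref{lemm64} itself says nothing about $F_1'$, and deriving the decay $t^{-2}$ from~\eqref{eq12} by hand requires careful bookkeeping of cancellations in a series whose absolute convergence is delicate. Citing Vaaler shortcuts this work entirely, and is the route I would take.
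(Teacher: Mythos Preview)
Your derivation of the first inequality from Lemma~\ref{lemm64} is correct and is exactly the one-line reduction one expects. The paper itself does not spell this out: its entire proof of Lemma~\ref{lemm65} is the citation ``See \cite[Lemma~12]{Ramare2001}'', so you have in fact supplied more detail than the paper does. The only discrepancy is bibliographic: for the derivative bound $|F_1'(t)|\ll t^{-2}$ the paper points to Ramar\'e~\cite{Ramare2001} rather than to Vaaler~\cite{Vaaler1985}; the estimate is stated and proved there (it is part of Ramar\'e's Lemma~12), whereas extracting it directly from Vaaler would require the additional bookkeeping you describe. Your series-differentiation sketch is on the right track but, as you note, would need care near the poles $t=\pm n$; citing \cite{Ramare2001} avoids this and matches the paper's route.
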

   \begin{proof}
   See \cite[Lemma 12]{Ramare2001}.
   \end{proof}
  \begin{lem}
    \label{lemm66}
    For $t\geq 0$, we have $|F_1(t)|\leq (\log 4) t.$
    \end{lem}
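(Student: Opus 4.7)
The plan is to split at $t=1$. For $t\geq 1$, Lemma~\ref{lemm64} gives
\[
|F_1(t)| \leq 1 + \frac{|\sin\pi t|}{\pi t(1+t)} \leq 1 + \frac{1}{2\pi} < \log 4 \leq (\log 4)\,t,
\]
which settles this range.

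For $t\in[0,1]$ one has $\sin\pi t\geq 0$; moreover, using $\log 4 = -2\sum_{n\geq 1}(-1)^n/n$ to regroup the series in the definition of $F_1$ yields
\[
F_1(t) = \frac{\sin\pi t}{\pi}(\log 4 + S(t)),\qquad S(t) := 2t^2\sum_{n\geq 1}\frac{(-1)^{n+1}}{n(n^2-t^2)}.
\]
Since $S(t)$ is an alternating series with decreasing positive terms (the first being $2t^2/(1-t^2)\geq 0$), the Leibniz criterion gives $S(t)\geq 0$, hence $F_1(t)\geq 0$. Therefore $|F_1(t)|=F_1(t)$, and it suffices to prove $F_1(t)\leq(\log 4)\,t$ on $[0,1]$.

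The core of the proof is to show that $F_1$ is concave on $[0,1]$. Granted $F_1''(t)\leq 0$ throughout, and since $F_1(0)=0$ and $F_1'(0)=\log 4$ (from the leading Taylor coefficient), the identity $F_1(t)-F_1'(0)\,t = \int_0^t(t-s)\,F_1''(s)\,ds\leq 0$ gives $F_1(t)\leq(\log 4)\,t$. Differentiating yields
\[
F_1''(t) = -\pi\sin(\pi t)\,g(t) + 2\cos(\pi t)\,g'(t) + \frac{\sin\pi t}{\pi}\,g''(t),\qquad g(t):=\log 4+S(t),
\]
with $F_1''(0)=0$ and, after the apparent poles of $g'$, $g''$ at $t=1$ cancel against the zero of $\sin\pi t$, $F_1''(1)=-2<0$.

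The main obstacle is proving $F_1''(t)\leq 0$ uniformly on $(0,1)$. To attack this, one can use the integral representation $g(t) = 2\int_0^\infty \cosh(tu)/(e^u+1)\,du$, valid for $|t|<1$, to obtain
\[
F_1''(t) = \frac{2}{\pi}\int_0^\infty \frac{\sin(\pi t)\cosh(tu)(u^2-\pi^2) + 2\pi u\cos(\pi t)\sinh(tu)}{e^u+1}\,du.
\]
Splitting the integral at $u=\pi$ and exploiting the sharp bound $S(t)\leq 2t^2/(1-t^2)$ from the first term of the alternating series, together with $|\sin\pi t|\leq \pi t$, one verifies that the negative contribution from $u\in(0,\pi)$ dominates the positive contribution from $u>\pi$, closing the concavity argument.
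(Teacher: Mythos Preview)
Your argument for $t\ge 1$ is fine, and the reduction on $[0,1]$ to the inequality $F_1(t)\le(\log 4)\,t$ via $F_1\ge 0$ is correct. The problem is the concavity step: you identify it as ``the main obstacle'' and then do not actually overcome it. The last paragraph only asserts that after splitting the integral at $u=\pi$ ``one verifies'' that the negative part dominates; no estimate is given, and it is not obvious. For $t$ near $1/2$ the $\cos(\pi t)$ term is small and the sign of $F_1''$ is governed by the balance of $\int_0^\pi$ against $\int_\pi^\infty$ of $\sin(\pi t)\cosh(tu)(u^2-\pi^2)/(e^u+1)$, which requires a genuine quantitative bound. Near $t=1$ your integral representation for $g$ and its derivatives diverges, so one also needs a separate limiting argument there. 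As written, the proof is incomplete at exactly the point you flag as the crux.

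More importantly, you have missed a one-line argument that the paper uses. The function $F_1$ admits the Fourier representation (recalled in the paper from \cite{Ramare2001})
\[
\frac{F_1(t)}{t}=-4\int_{0}^{1/2}\log\bigl|\sin(\pi v)\bigr|\,\cos(2\pi t v)\,dv,
\]
valid for all $t>0$. Bounding $|\cos(2\pi tv)|\le 1$ and using the classical evaluation $-\int_0^{1/2}\log|\sin\pi v|\,dv=\tfrac12\log 2$ gives immediately
\[
\frac{|F_1(t)|}{t}\le 4\int_{0}^{1/2}\bigl|\log|\sin\pi v|\bigr|\,dv=\log 4,
\]
which is the whole lemma. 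No case split, no concavity, no special treatment of endpoints. Your approach, even if the concavity estimate were filled in, is far more laborious than necessary.
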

  \begin{proof}
 From \cite{Ramare2001}, we recall that 
  \begin{eqnarray}
  \label{eqe2}
  \frac{F_1(t)}{t}
  &=&
  -2\int_{-1/2}^{1/2}\log \left|\sin(\pi v)\right| e(-tv)\, dv \nonumber
 \\& =&
 -4\int_{0}^{1/2}\log \left|\sin(\pi v)\right| \cos (2\pi tv)\, dv. 
  \end{eqnarray}
   It follows that 
  \begin{equation*}
   \frac{|F_1(t)|}{t}
  \leq 
     4\int_{0}^{1/2}\log \left|\sin(\pi v)\right| \, dv=\log 4.
  \end{equation*}
  This completes the proof.
  \end{proof}
 \section{Study of $G_{F_1}$}
 
 \begin{lem}
 \label{lem8}
 For $ u \geq 0$, we have $\left|G_{F_1}(u)\right|\leq 2$.
 \end{lem}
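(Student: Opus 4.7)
The plan is to bound $|G_{F_1}(u)|$ by a direct triangle inequality on the defining series \eqref{eq0011}, reducing the problem to the uniform pointwise estimate $|1-F_1(t)| \leq 1$ for all $t \geq 0$, which we can extract from Vaaler's inequality (Lemma~\ref{lemm64}).

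First I would observe that $|\chi(2)| \leq 1$ for any Dirichlet character, hence $|\chi(2)^\ell| \leq 1$ for every $\ell \geq 0$. Applying the triangle inequality to \eqref{eq0011} gives
\begin{equation*}
|G_{F_1}(u)| \leq \sum_{\ell \geq 0} \frac{|1 - F_1(2^\ell u)|}{2^\ell}.
\end{equation*}
Since $\sum_{\ell \geq 0} 2^{-\ell} = 2$, the desired bound will follow immediately once $|1 - F_1(t)| \leq 1$ is established on $[0,\infty)$.

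For $t > 0$ we have $\sgn(t) = 1$, so Lemma~\ref{lemm64} yields
\begin{equation*}
|1 - F_1(t)| \leq \left|\frac{\sin \pi t}{\pi t}\right| \cdot \frac{1}{1+t} \leq \frac{1}{1+t} \leq 1,
\end{equation*}
where the middle step uses the elementary inequality $|\sin x| \leq |x|$. For $t = 0$, Lemma~\ref{lemm66} forces $F_1(0) = 0$, hence $|1 - F_1(0)| = 1$. Combining the two cases gives the required pointwise bound on all of $[0,\infty)$.

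There is no substantive obstacle in this argument: the constant $2$ appearing in the statement is precisely the sum of the geometric series $\sum_{\ell \geq 0} 2^{-\ell}$, and Vaaler's inequality already packages the unit bound on $|1 - F_1|$ for free. The only mild care needed is that the alternative estimate in Lemma~\ref{lemm65} degenerates at $t = 0$, so one must invoke Lemma~\ref{lemm66} (or continuity from Lemma~\ref{lemm64}) to handle the endpoint cleanly.
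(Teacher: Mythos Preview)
Your argument is correct and is essentially the same as the paper's: both apply the triangle inequality to the series, invoke Lemma~\ref{lemm64} to get $|1-F_1(t)|\le 1$, and sum the geometric series to obtain~$2$. You are a touch more explicit than the paper in two harmless places---you keep track of the factor $|\chi(2)^\ell|\le 1$ from the general definition~\eqref{eq0011} (the paper tacitly uses $\chi(2)=1$) and you treat the endpoint $t=0$ separately via Lemma~\ref{lemm66}---but neither constitutes a different method.
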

 \begin{proof}
 From Lemma \ref{lemm64}, we get
 \begin{equation*}
 \left|1-F_1(2^\ell u)\right| \leq \frac{|\sin(2^\ell \pi u)|}{2^\ell \pi u} \frac{1}{1+2^\ell u}\leq 1.
 \end{equation*}
 It follows that 
 \begin{equation*}
 \left|G_{F_1}(u)\right|
 \leq 
 \sum_{\ell \geq 0}\frac{\left|1-F_1(2^\ell u)\right|}{2^\ell}
 \leq 
 \sum_{\ell \geq 0}\frac{1}{2^\ell} =2.
 \end{equation*}
 This completes the proof.
 \end{proof}
 \begin{lem}
 \label{lem6}
 For $10^{-5}\leq u\leq 1$, $G_{F_1}(u)$ is positive.
 \end{lem}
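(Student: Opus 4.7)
My plan is to use the recursion
\[
G_{F_1}(u) \;=\; (1 - F_1(u)) \;+\; \tfrac{1}{2}\, G_{F_1}(2u),
\]
which follows directly from the definition (\ref{eq0011}) with $\chi(2)=1$.

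\textbf{Step 1: Reduction to $[1/2,\,1]$.} Given $u \in [10^{-5},\,1/2)$, pick the smallest integer $L \geq 2$ such that $2^{L-1}u \geq 1/2$; then $2^{L-1}u \in [1/2,\,1)$. Iterating the recursion,
\[
G_{F_1}(u) \;=\; \sum_{\ell=0}^{L-2} \frac{1 - F_1(2^\ell u)}{2^\ell} \;+\; \frac{1}{2^{L-1}}\, G_{F_1}(2^{L-1}u).
\]
For $0 \le \ell \le L-2$ we have $2^\ell u < 1/2$, so $\sin(\pi\, 2^\ell u) > 0$, and Lemma \ref{lemm64} (applied with $t = 2^\ell u$) gives $1 - F_1(2^\ell u) \ge 0$. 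Hence $G_{F_1}(u) \ge 2^{-(L-1)} G_{F_1}(2^{L-1}u)$, and it suffices to prove $G_{F_1}(v) \ge 0$ for $v \in [1/2,\,1]$.

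\textbf{Step 2: Positivity on $[1/2,\,1]$.} On this range the leading term $1 - F_1(v) \ge 0$ (same argument as above), but the higher-order terms oscillate in sign. I truncate the series at some large level $N$, controlling the tail via Lemma \ref{lem8}:
\[
\Bigl|\,\tfrac{1}{2^N}\, G_{F_1}(2^N v)\,\Bigr| \;\leq\; 2^{1-N}.
\]
The partial sum $\sum_{\ell=0}^{N-1}(1 - F_1(2^\ell v))/2^\ell$ is then evaluated using the Mittag-Leffler series (\ref{eq12}); combined with a Lipschitz bound for this partial sum coming from $|F'_1(t)| \ll t^{-2}$ (Lemma \ref{lemm65}), a pointwise numerical verification on a sufficiently fine grid in $[1/2,\,1]$ certifies non-negativity.

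\textbf{Main obstacle.} Since $F_1(m)=1$ for every positive integer $m$, we have $G_{F_1}(1) = 0$, so the available margin shrinks to zero as $v \to 1$. Any rigorous argument must combine precise numerical values of $F_1$ at the dyadic multiples of $v$ with an effective derivative bound to rule out a negative dip immediately below $v = 1$. An alternative, more elegant route would be to prove that $G_{F_1}$ is monotonically decreasing on $[1/2,\,1]$ --- together with the endpoint values $G_{F_1}(1/2) = 1 - 2/\pi$ and $G_{F_1}(1) = 0$ this would complete the argument --- but verifying such monotonicity rigorously appears to require a similar tail-truncation analysis.
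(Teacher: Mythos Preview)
Your Step~1 reduction to $[1/2,1]$ via the recursion is correct and clean; the paper does not do this. However, your Step~2 has exactly the gap you flag: since $G_{F_1}(1)=0$, no grid-plus-tail argument with a fixed tail error $2^{1-N}$ can certify nonnegativity all the way up to $v=1$. The Lipschitz constant you extract from Lemma~\ref{lemm65} is bounded, so on any grid of mesh $h$ the certified lower bound at the rightmost node is at best $G_{F_1}(1) + O(h) - 2^{1-N}$, which is negative for every fixed $h,N$. The ``combine precise numerics with an effective derivative bound'' sentence does not actually close this: you would need the derivative bound to have the \emph{correct sign}, not just a magnitude bound, in a full neighbourhood of $1$.

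The paper takes precisely your alternative route: it proves $G_{F_1}$ is strictly decreasing on the whole interval, then uses $G_{F_1}(1)=0$. Concretely, from the integral representation~\eqref{eq15} one gets
\[
G_{F_1}'(u)=\frac{2}{u}\sum_{\ell\ge 0}\frac{1}{2^{\ell}}\int_0^{1/2}\psi(v)\,\sin(2^{\ell+1}\pi u v)\,dv,\qquad \psi(v)=\cot(\pi v)-\frac{\pi v}{\sin^2(\pi v)}\le 0.
\]
Truncating at $L=15$, the tail is bounded explicitly via $|\psi(v)|\le \pi^3 v/6$, and a GP/PARI check shows the truncated sum plus tail bound is at most $-0.0001353$ for all $u\in[10^{-5},1]$. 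Hence $G_{F_1}'<0$, and positivity follows. The key point is that working with the derivative converts the ``vanishing margin at $v=1$'' problem into a sign problem with a uniform negative margin, which a finite numerical verification can handle.

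Your Step~1 would in fact combine well with the paper's argument: one only needs $G_{F_1}'<0$ on $[1/2,1]$, not on $[10^{-5},1]$, which shortens the numerical check and avoids the small-$u$ end entirely.
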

 \begin{proof}
 Recall that
 \begin{equation}
 \label{eq14}
 G_{F_1}(u)=\sum_{\ell \geq 0}\frac{1-F_1(2^\ell u)}{2^\ell}
 =2-\sum_{\ell \geq 0}\frac{F_1(2^\ell u)}{2^\ell}.
 \end{equation}
 From Eq. \eqref{eqe2}, we write
 \begin{equation}
 \label{eq15}
 \frac{F_1(t)}{t}=\frac{2}{t}\int_{0}^{1/2}\cot (\pi v) \sin (2\pi tv)\, dv.
 \end{equation}
 Using integration by parts, we get
 \begin{equation}
 \label{eqe4}
 G^{\prime}_{F_1}(u)
 =
 \frac{2}{u} \sum_{\ell \geq 0} \int_{0}^{1/2} \left(\cot (\pi v)-\frac{\pi v}{\sin^2 (\pi v)}\right) \frac{\sin(2^{\ell +1}\pi uv)}{2^\ell}\, dv.
 \end{equation}
 Now, we define, for an integer parameter $L$, 
 \begin{equation*}
  S_L(u)= \sum_{\ell \leq L} \int_{0}^{1/2} \psi(v) \frac{\sin(2^{\ell +1}\pi uv)}{2^\ell}\, dv,
  \end{equation*}
  and 
  \begin{equation*}
 R_L(u)= \sum_{\ell \geq L+1}\int_{0}^{1/2} \psi(v) \frac{\sin(2^{\ell +1}\pi uv)}{2^\ell}\, dv,
 \end{equation*}
 where the function $\psi(v)=\cot (\pi v)-(\pi v)/\sin^2(\pi v)$ is negative. Eq. \eqref{eqe4} can be written by
 \begin{equation}
 \label{eq62}
 G^{\prime}_{F_1}(u)
 = S_L(u)+R_L(u).
 \end{equation}
 Notice that
 \begin{equation*}
 \left|R_L(u)\right|
 \leq 
 \sum_{ \ell \geq L+1}\int_{0}^{1/2^{\ell+1}} \left| \psi (v)\right| 2\pi uv\, dv +\sum_{ \ell \geq L+1} \int_{1/2^{\ell+1}}^{1/2} \frac{\left| \psi (v)\right|}{2^{\ell}}\, dv.
 \end{equation*}
 On using $\left|\sin(2\pi v)-2\pi v\right|\leq (2\pi v)^3/3!$ and $\sin (\pi v)\geq 2v $ for all $0~\leq~v~\leq~1/2$, we get $|\psi (v)|~\leq~\pi^3 v/6$. It follows that 
 \begin{eqnarray*}
 \left|R_L(u) \right|
 &\leq &
  \frac{\pi^4u}{9\cdot 8}\sum_{\ell \geq L+1}\frac{1}{2^{3\ell}} + \frac{\pi^3}{6\cdot 8}\sum_{ \ell \geq L+1}\left(\frac{1}{2^{\ell}}-\frac{1}{2^{3\ell}}\right)
 \\ &\leq &
  \frac{\pi^4 u}{9\cdot 8\cdot 7\cdot 2^{3L}}+\frac{\pi^3}{ 6\cdot 8}\left(\frac{1}{2^{L}}-\frac{1}{7\cdot 2^{3L}}\right).
 \end{eqnarray*}
 Therefore
 \begin{equation*}
 \left|R_L(u)\right|
 \leq 
   \frac{\pi^4 u}{504\cdot 2^{3L}}+\frac{\pi^3}{48}\left(\frac{1}{2^{L}}-\frac{1}{7\cdot 2^{3L}}\right).
 \end{equation*}
 For $10^{-5}\leq u\leq 1$ and $L=15$, the maxima of $S_L(u)+\left|R_L(u)\right|$ numerically seems non-increasing and GP/PARI needs at most $10$ seconds to prove it is $\leq  -0.0001353$. Since $G_{F_1}(1)=0$, it follows that $G_{F_1}$ is positive. This completes the proof.
 \end{proof}
  \begin{lem}
  \label{lem7}
 For $ 10^{-5}\leq u\leq 1$, we have 
 $\frac{d}{du}\left(\frac{G_{F_1}(u)-2}{u}\right)\geq 0.$
 \end{lem}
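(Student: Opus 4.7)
The approach is to mirror the proof of Lemma~\ref{lem6}. From $(G_{F_1}(u)-2)/u=-\sum_{\ell\ge 0}F_1(2^{\ell}u)/(2^{\ell}u)$ and setting $h(t):=F_{1}(t)/t$, termwise differentiation (justified by Lemma~\ref{lemm65}, since $|F_1(t)/t^{2}|\ll 1/t^{2}$ and $|F_1'(t)/t|\ll 1/t^{3}$) yields
\begin{equation*}
\frac{d}{du}\!\left(\frac{G_{F_1}(u)-2}{u}\right)=-\sum_{\ell\ge 0}2^{\ell}h'(2^{\ell}u).
\end{equation*}
Combining \eqref{eq15} with the identity $F_1'(t)=-(2/t)\int_{0}^{1/2}\psi(v)\sin(2\pi tv)\,dv$ implicit in the derivation of~\eqref{eqe4} (where $\psi(v)=\cot(\pi v)-\pi v/\sin^{2}(\pi v)$, as in the proof of Lemma~\ref{lem6}), I would derive
\begin{equation*}
h'(t)=-\frac{2}{t^{2}}\int_{0}^{1/2}\Phi(v)\sin(2\pi tv)\,dv,\qquad \Phi(v):=\cot(\pi v)+\psi(v).
\end{equation*}
The factor $2^{\ell}$ produced by the chain rule is then absorbed by the $1/t^{2}=1/(2^{\ell}u)^{2}$ appearing in $h'$, which gives
\begin{equation*}
\frac{d}{du}\!\left(\frac{G_{F_1}(u)-2}{u}\right)=\frac{2}{u^{2}}\sum_{\ell\ge 0}\frac{1}{2^{\ell}}\int_{0}^{1/2}\Phi(v)\sin(2^{\ell+1}\pi uv)\,dv,
\end{equation*}
a series with exactly the same shape as \eqref{eqe4}, with the kernel $\Phi$ in place of $\psi$.

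With this expression in hand, I would split the series at $L=15$ as $\widetilde S_{L}(u)+\widetilde R_{L}(u)$, following the proof of Lemma~\ref{lem6} line by line. The Laurent expansion $\Phi(v)=1/(\pi v)-\pi v+O(v^{3})$ near $v=0$ gives $|\Phi(v)|\ll 1/v$ on $(0,1/2]$; combined with $|\sin x|\le\min(x,1)$ and the split of each integral at $v=1/2^{\ell+1}$, this yields a tail bound of the form $|\widetilde R_{L}(u)|\ll (L+u)/2^{L}$, of order $10^{-4}$ for $L=15$ uniformly on $u\in[10^{-5},1]$. A GP/PARI computation on a sufficiently fine grid (together with the monotonicity control used in Lemma~\ref{lem6}) then verifies that $\widetilde S_{L}(u)\ge|\widetilde R_{L}(u)|$ throughout the interval, and multiplication by the positive prefactor $2/u^{2}$ delivers the claimed inequality.

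The main difficulty is the tail bound: while the kernel $\psi$ of Lemma~\ref{lem6} is sign-definite (negative) on $(0,1/2)$, the kernel $\Phi$ changes sign (positive near $v=0$ and negative near $v=1/2$), so the absolute-value estimate of $\widetilde R_{L}$ loses the cancellation between the two regions and is correspondingly wasteful. Thus $L$ must be taken large enough that the lost cancellation does not swallow the lower bound on $\widetilde S_{L}$, and particular care is required near $u=1$, where $G_{F_1}(1)=0$ forces the derivative to be close to zero.
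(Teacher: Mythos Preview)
Your approach is essentially identical to the paper's: your kernel $\Phi(v)=2\cot(\pi v)-\pi v/\sin^{2}(\pi v)$ is exactly $2\varphi(v)$ with $\varphi(v)=\cot(\pi v)-\pi v/(2\sin^{2}(\pi v))$, which is the kernel the paper arrives at after the same integration by parts, so your integral representation and the paper's Eq.~\eqref{eq71} coincide (with prefactor $4/u^{2}$ in place of your $2/u^{2}$). The only adjustment is that the paper takes $L=21$ rather than $L=15$, precisely for the reason you anticipate in your last paragraph: the bound $|\varphi(v)|\le \pi/(8v)$ gives a tail of size roughly $\pi(L+2)\log 2/2^{L+3}$, while the numerical minimum of the main term is only about $2\times 10^{-6}$.
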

 \begin{proof}
 Since 
 \begin{equation*}
 \frac{G_{F_1}(u)-2}{u}=-\sum\limits_{\ell \geq 0}\frac{F_1\left(2^{\ell} u\right)}{2^{\ell}u}, 
 \end{equation*}
 using Eq. \eqref{eq15}, we obtain
 \begin{multline*}
 \frac{d}{du}\left(\frac{G_{F_1}(u)-2}{u}\right)
 =
 \frac{2}{ u^2}\sum_{\ell \geq 0}\frac{1}{2^\ell}\int_{0}^{\frac{1}{2}} \cot(\pi v) \sin(2^{\ell+1}\pi uv)\, dv
 \\-\frac{4\pi }{u}\sum_{\ell \geq 0}\int_{0}^{\frac{1}{2}} v\cot(\pi v) \cos(2^{\ell+1}\pi uv)\, dv.
 \end{multline*}
 Using integration by parts for the last integral above, we find that
 \begin{equation}
 \label{eq71}
 \frac{d}{du}\left(\frac{G_{F_1}(u)-2}{u}\right)
 =
 \frac{4}{u^2}\sum_{\ell \geq 0}\int_{0}^{\frac{1}{2}} \frac{\sin(2^{\ell+1}\pi uv)}{2^\ell}\varphi (v)\, dv.
 \end{equation}
 where $\varphi(v)=\cot(\pi v)-\pi v/(2\sin^2 (\pi v))$.
 For an integer parameter $L$, we define
 \begin{equation*}
 \tilde{S}_L(u)  = \sum_{0\leq \ell \leq L} \int_{0}^{1/2} \varphi (v) \frac{\sin (2^{\ell +1}\pi uv)}{2^{\ell}}\, dv,
 \end{equation*}
 and 
 \begin{equation*}
 \tilde{R}_L(u)  = \sum_{\ell \geq L+1} \int_{0}^{1/2} \varphi (v) \frac{\sin (2^{\ell +1}\pi uv)}{2^{\ell}}\, dv.
 \end{equation*}
 Then, Eq. \eqref{eq71} can be written as 
 \begin{equation}
 \label{eq72}
 \frac{d}{du}\left(\frac{G_{F_1}(u)-2}{u}\right)
 =
 \tilde{S}_L(u)+\tilde{R}_L(u).
 \end{equation}
 Notice that
 \begin{equation*}
 \left|\tilde{R}_L(u)\right|
 \leq 
 \sum_{ \ell \geq L+1}\int_{0}^{1/2^{\ell+1}} \left| \varphi (v)\right| 2\pi uv\, dv +\sum_{ \ell \geq L+1} \int_{1/2^{\ell+1}}^{1/2} \frac{\left| \varphi (v)\right|}{2^{\ell}}\, dv.
 \end{equation*}
 On using $\left|\sin(2\pi v)-\pi v\right|\leq \pi v$ and $\sin (\pi v)\geq 2v $, for all $0\leq v\leq 1/2$, we get $|\varphi (v)|\leq \pi/(8 v)$. It follows that: 
 \begin{eqnarray*}
 \left|\tilde{R}_L(u)\right|
 &\leq &
  \frac{\pi^2u}{8}\sum_{\ell \geq L+1}\frac{1}{2^{\ell}} + \frac{\pi \log 2}{8}\sum_{ \ell \geq L+1}\frac{\ell}{2^{\ell}}
 \\ &\leq &
  \frac{\pi^2 u}{2^{L+3}}+\frac{\pi (L+2)\log 2}{ 2^{L+3}}.
 \end{eqnarray*}
 Therefore
 \begin{equation*}
 \left|\tilde{R}_L(u)\right|
 \leq 
  \frac{\pi^2 u}{2^{L+3}}+\frac{\pi (L+2)\log 2}{ 2^{L+3}}.
 \end{equation*}
 For $10^{-5}\leq u\leq 1$ and $L=21$, the minima of $\tilde{S}_L(u)+\left|\tilde{R}_L(u)\right|$ numerically seems increasing and GP/PARI needs at most $10$ seconds to prove it is $\geq 0.0000019$. Then, the derivative of $\left(G_{F_1}-2\right)/u$ is positive. This completes the proof.
 \end{proof}
 \begin{lem}
 \label{lem5}
 Set
 \begin{equation}
 \label{equ62}
 b_{F_1}= \frac{1}{2} \int_{0}^{1}\frac{G_{F_1}(t)-2}{t}\, dt
   + 
       \frac{1}{2}\int_{1}^{\infty}\frac{|G_{F_1}(t)|}{t}\, dt
       +\gamma+\log 2.
 \end{equation}
 Then $b_{F_1}  \leq -0.66266.$
 \end{lem}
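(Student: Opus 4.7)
The plan is to prove the estimate $b_{F_1} \le -0.66266$ by explicitly computing the two integrals in \eqref{equ62} with rigorous error bars, following the PARI/GP--based strategy already used for Lemmas~\ref{lem6} and~\ref{lem7}. The key technical ingredient is the truncated series $G_{F_1,L}(t) = \sum_{\ell=0}^{L}(1-F_1(2^\ell t))/2^\ell$, whose deviation from $G_{F_1}$ is bounded uniformly by $\sum_{\ell>L} 1/2^\ell = 2^{-L}$ in view of the pointwise estimate $|1-F_1(s)|\le 1$ used in the proof of Lemma~\ref{lem8}; for each concrete $t$ one then evaluates $G_{F_1,L}(t)$ from the explicit series~\eqref{eq12} defining $F_1$.

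I would split the two integrals at suitable cutoffs: $\int_0^1 (G_{F_1}(t)-2)/t\,dt$ at a small threshold $\eta$ (say $10^{-5}$, matching the working range in the preceding lemmas), and $\int_1^\infty |G_{F_1}(t)|/t\,dt$ at a moderately large $A$. The two outer pieces are then bounded analytically. For the far tail, the termwise estimate $|1-F_1(2^\ell t)| \le |\sin(2^\ell\pi t)|/(2^\ell\pi t(1+2^\ell t))$ from Lemma~\ref{lemm64} propagates through the defining series to give $|G_{F_1}(t)| = O(1/t^2)$ as $t\to\infty$, so the contribution of $(A,\infty)$ is $O(1/A^2)$. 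For the piece near the origin, Lemma~\ref{lemm66} yields $|F_1(2^\ell t)|/2^\ell \le (\log 4)\,t$ on every dyadic scale with $2^\ell t\le 1$, which sums to $|G_{F_1}(t)-2| = O(t\log(1/t))$; the integral on $(0,\eta)$ is therefore $O(\eta\log(1/\eta))$, arbitrarily small.

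On the bounded intermediate regions $(\eta,1)$ and $(1,A)$ the integrand is computed from $G_{F_1,L}$ and integrated numerically, subdividing finely enough to resolve the high-frequency oscillations of $F_1(2^\ell t)$ in the top dyadic blocks. The final bound is obtained by adding these numerical values, the two analytic tails, and the constant $\gamma+\log 2 \approx 1.2703$. The principal obstacle will be quantitative precision: since $\gamma+\log 2$ has to be overcome by roughly $1.93$ by the half-sum of the two integrals, the truncation level $L$ must be large enough and the mesh fine enough that the cumulative explicit error stays safely below the five-digit tolerance to the target $-0.66266$. In particular, the logarithmic growth $(G_{F_1}(t)-2)/t \sim 2\log t$ as $t\to 0^+$ forces a dyadic rather than uniform subdivision of $(\eta,1)$: on each block $[2^{-k-1},2^{-k}]$ only the terms with $\ell\lesssim k$ contribute non-trivially, and an adapted mesh is needed to keep both the truncation and quadrature errors small across all scales.
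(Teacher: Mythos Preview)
Your proposal is correct and follows essentially the same strategy as the paper: truncate the defining series for $G_{F_1}$ at a finite level $L$, integrate the resulting smooth function numerically on a bounded window, and control the $\ell$-tail and the $t$-tail analytically using Lemmas~\ref{lemm64}--\ref{lemm66}. The paper chooses $L=50$ and $T=1000$ and packages the three remainder pieces into a single bound $\le 0.00022$; your version with an extra cut at $\eta$ near the origin and a dyadic mesh is a slightly more cautious organization of the same computation. One implementation detail the paper adds and you may find convenient: before coding, it rewrites $F_1$ via the partial-fraction identity $\pi/\sin(\pi t)=1/t+\sum_{n\ge 1}(-1)^n\bigl(\tfrac{1}{t-n}+\tfrac{1}{t+n}\bigr)$ to obtain $F_1(t)=1+\tfrac{\sin(\pi t)}{\pi}\bigl(\log 4-\tfrac{1}{t}+2t\sum_{n\ge1}\tfrac{(-1)^n}{n(t+n)}\bigr)$, which evaluates more stably than the raw series~\eqref{eq12}.
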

 \begin{proof}
 By Eq. \eqref{eq12}, we have 
  \begin{equation*}
  F_{1}(t)=\frac{\sin (\pi t)}{\pi}\left\{\log 4+\sum_{n\geq 1}(-1)^n\left(\frac{1}{t-n}+\frac{1}{n}\right)-\sum_{n\geq 1}(-1)^n\left(\frac{1}{t+n}-\frac{1}{n}\right)\right\}.
   \end{equation*}
 Thanks to the equality $(2.2)$ of \cite{Vaaler1985}, when $F=1$, we write 
 \begin{equation*}
 \frac{\pi}{\sin (\pi t)}=\frac{1}{t}+\sum_{n\geq 1}(-1)^n\left(\frac{1}{t-n}+\frac{1}{n}\right)+\sum_{n\geq 1}(-1)^n\left(\frac{1}{t+n}-\frac{1}{n}\right). 
 \end{equation*}
 It follows that 
 \begin{equation*}
 F_{1}(t)
  =
 \frac{\sin (\pi t)}{\pi}\left\{\log 4+\frac{\pi}{\sin (\pi t)}-\frac{1}{t}-2\sum_{n\geq 1}(-1)^n\left(\frac{1}{t+n}-\frac{1}{n}\right)\right\}.
 \end{equation*}
 Thus
 \begin{equation}
 \label{eqe6}
 F_{1}(t)=
 1+\frac{\sin (\pi t)}{\pi}\left\{\log 4-\frac{1}{t}-2\sum_{n\geq 1}(-1)^n\left(\frac{-t}{n(t+n)}\right)\right\}.
 \end{equation}
 From Eqs. \eqref{equ62} and \eqref{eqe6}, we write the following simple GP-PARI code:
 \begin{verbatim}
 {F1(t) = 
    1+sin(Pi*t)/Pi*(log(4)-1/t+2*t*sumalt(n=1,cos(Pi*n)/n/(t+n)))}
 {G(x, borne=50) = sum(l=0, borne, (1-F1(2^l*x))/2^l)}
 {GG(x, borne=50) = sum(l=0, borne,-F1(2^l*x)/2^l)}
 {bF1( borne=50, bornex=1000) = 
     Euler+log(2)+1/2*intnum(x=0, 1, GG(x, borne)/x)
      +1/2*intnum(x=1, bornex, abs(G(x, borne))/x)}
  default(realprecision,200)
 \end{verbatim}
 For $T=1000$ and $L=50$, we find that 
 \begin{equation*}
 \left|-\frac{1}{2}\int_{0}^{1}\sum_{\ell \geq L+1}\frac{F_1(2^\ell t)}{2^\ell t}\, dt +\frac{1}{2}\int_{1}^{T}\sum_{\ell \geq L+1}\frac{1-F_1(2^lt)}{2^lt}\, dt +\frac{1}{2}\int_{T}^{\infty}\sum_{\ell \geq 0}\frac{1-F_1(2^\ell t)}{2^lt}\, dt \right|
 \leq 0.00022,
 \end{equation*}
 and that $b_{F_{1}}\leq-0.66266.$ 
 This completes the proof.
  \end{proof}
 \begin{lem}
 \label{lem1}
 Under the hypotheses of Theorem \ref{Thm2}, we have  
 \begin{equation*}
 \max_{T\leq u}\left|\frac{d}{du} \left(\frac{G_F(u) }{u}\right)\right|
 \leq 
 \frac{8c_0}{7T^4}+\frac{4c_1}{3T^3}.
 \end{equation*}
 where the constants $c_0$ and $c_1$ are positive numbers chosen according to the function $F$.
 \end{lem}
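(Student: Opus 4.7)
My plan is to directly differentiate $G_F(u)/u$ termwise and bound each resulting series using the two decay hypotheses $|1-F(t)|\le c_0/t^2$ and $|F'(t)|\le c_1/t^2$ supplied by Theorem~\ref{Thm2}.

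First I would write, by the quotient rule,
\[
\frac{d}{du}\!\left(\frac{G_F(u)}{u}\right)=\frac{G_F'(u)}{u}-\frac{G_F(u)}{u^{2}},
\]
and differentiate the defining series \eqref{eq0011} term-by-term to get $G_F'(u)=-\sum_{\ell\ge 0}F'(2^\ell u)\,\chi(2)^\ell$. Termwise differentiation is legitimate because $|F'(2^\ell u)\chi(2)^\ell|\le c_1/(2^{2\ell}u^2)$ provides a summable majorant uniform on compact subsets of $u>0$.

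Next I would estimate the two pieces separately. Using $|\chi(2)|\le 1$ and the hypothesis $|F'(t)|\le c_1/t^2$,
\[
\left|\frac{G_F'(u)}{u}\right|\le \frac{1}{u}\sum_{\ell\ge 0}\frac{c_1}{(2^\ell u)^{2}}=\frac{c_1}{u^{3}}\sum_{\ell\ge 0}\frac{1}{4^{\ell}}=\frac{4c_1}{3u^{3}},
\]
and using $|1-F(t)|\le c_0/t^2$,
\[
\left|\frac{G_F(u)}{u^{2}}\right|\le \frac{1}{u^{2}}\sum_{\ell\ge 0}\frac{1}{2^{\ell}}\cdot\frac{c_0}{(2^{\ell}u)^{2}}=\frac{c_0}{u^{4}}\sum_{\ell\ge 0}\frac{1}{8^{\ell}}=\frac{8c_0}{7u^{4}}.
\]
The triangle inequality then gives the pointwise bound $|(G_F/u)'|\le 8c_0/(7u^{4})+4c_1/(3u^{3})$.

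Finally, since the right-hand side is a sum of strictly decreasing functions of $u$, it attains its maximum over $u\ge T$ at $u=T$, which yields the claimed inequality. The only subtle point is the termwise differentiation justification, but the $2^{-2\ell}$ decay of the majorant makes this routine, so I expect no real obstacle; the entire argument is essentially two geometric series evaluated to $4/3$ and $8/7$.
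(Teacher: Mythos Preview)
Your proof is correct and follows essentially the same route as the paper: split $(G_F/u)'$ via the quotient rule into $G_F'(u)/u$ and $G_F(u)/u^2$, bound each termwise using $|F'(t)|\le c_1/t^2$ and $|1-F(t)|\le c_0/t^2$, and sum the resulting geometric series to $4/3$ and $8/7$. Your write-up is in fact slightly more explicit than the paper's about the chain-rule cancellation in $G_F'(u)=-\sum_{\ell\ge 0}F'(2^\ell u)\chi(2)^\ell$ and about why the supremum over $u\ge T$ is attained at $u=T$.
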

 \begin{proof}
 Notice that
 \begin{eqnarray*}
 \left|\frac{d}{du}\left(\frac{G_{F}(u)}{u}\right)\right|\leq \frac{|G^\prime _{F}(u)|}{u}+\frac{|G_{F}(u)|}{u^2}.
 \end{eqnarray*}
 From our hypotheses, it follows that the series $\sum_{\ell\geq 0}\frac{1-F(2^{\ell}u)}{2^{\ell} u^2}$ converges normally as well as the series $\sum_{\ell \geq 0}\frac{\left(1-F(2^{\ell}u)\right)^{\prime}}{2^{\ell}u}$ uniformly for $u \in [\lambda, \infty[$, for any $\lambda >0$. We infer that
 \begin{equation*}
 \left|\frac{G^\prime_{F}(u)}{u}\right|\leq \sum_{\ell \geq 0}\frac{\left|F^{\prime}(2^{\ell}u)\right|}{u} \leq \sum_{\ell \geq 0}\frac{c_1}{2^{2\ell}u^3}=\frac{4c_1}{3u^3},
 \end{equation*}
 and
 \begin{equation*}  
 \frac{|G_{F}(u)|}{u^2}\leq \sum_{\ell \geq 0}\frac{|1-F(2^{\ell}u)|}{2^{\ell}u^2}
 \leq 
 \sum_{\ell \geq 0}\frac{c_{0}}{2^{3\ell}u^4}
  =
  \frac{8c_{0}}{7u^4}.
 \end{equation*}
 This completes the proof.
 \end{proof}
 \begin{lem}
  \label{lem2}
 Under the hypotheses of Theorem \ref{Thm2}. we have 
 \begin{equation*}
  \int_{0}^{u_0}\left|\frac{G_F(u)-2}{u}\right|\, du
  \leq 
  \frac{c_3u_0}{\log 2}\left( -\log u_0+\log \tfrac{c_2}{c_3}+\log \log 2+\log 2+2\right),
 \end{equation*}
 where $c_2$ and $c_3$ are positive numbers and chosen according to the function $F$.
  \end{lem}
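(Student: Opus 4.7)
The plan is to reduce $|G_F(u)-2|/u$ to a dyadic series and integrate term by term, using both size hypotheses on $F$. Under the hypothesis $\chi(2)=1$ implicit in the setup (needed to ensure $G_F(0)=2$ and hence the integrability of the integrand at $0$), the defining series \eqref{eq0011} collapses to $G_F(u)-2=-\sum_{\ell\ge 0}F(2^\ell u)/2^\ell$, and the triangle inequality together with the bounds $|F(t)|\le c_2$ and $|F(t)|\le c_3 t$ yields
\[
\left|\frac{G_F(u)-2}{u}\right|\le \sum_{\ell\ge 0}\frac{|F(2^\ell u)|}{2^\ell u}\le \sum_{\ell\ge 0}\min\bigl(c_3,\,c_2/(2^\ell u)\bigr).
\]

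After exchanging integration and summation, each term can be evaluated exactly. Writing $a_\ell:=c_2/(c_3\cdot 2^\ell)$, one has
\[
\int_0^{u_0}\min\bigl(c_3,\,c_2/(2^\ell u)\bigr)\,du=\begin{cases}c_3 u_0 & \text{if } a_\ell\ge u_0,\\ c_3 a_\ell+(c_2/2^\ell)\log(u_0/a_\ell) & \text{if } a_\ell<u_0,\end{cases}
\]
so splitting the sum at $L=\lfloor\log_2(c_2/(c_3 u_0))\rfloor$ and using the geometric identities $\sum_{\ell>L}2^{-\ell}=2^{-L}$ and $\sum_{\ell>L}\ell\cdot 2^{-\ell}=(L+2)\,2^{-L}$, the total collapses into $c_3 u_0\bigl[L+1+\phi(\beta_0)\bigr]$, where $\beta_0=\log_2(c_2/(c_3 u_0))-L\in[0,1)$ and $\phi$ is an explicit bounded function of $\beta_0$. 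The logarithmic main term $(-\log u_0+\log(c_2/c_3))/\log 2$ then comes from $L\log 2\approx \log(c_2/(c_3 u_0))$, combined with $\int_0^{u_0}(-\log u)\,du=u_0(1-\log u_0)$ at the final stage.

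The hard part will be matching the precise additive constant $\log\log 2+\log 2+2$ stated in the lemma. A naive split at the crossover $\ell\approx\log_2(c_2/(c_3 u))$ produces an additive constant that is slightly too large; the presence of $\log\log 2$ in the target hints that the correct splitting threshold is the real-valued $N^{\ast}=\log_2\bigl(2c_2\log 2/(c_3 u)\bigr)$, which minimises $Nc_3+2c_2/(u\cdot 2^N)$ over $N\in\mathbb{R}$. Carefully handling the integer-rounding error near $N^\ast$, and in particular averaging the two rounding choices against the length of the interval over which each choice is best, is what I expect to be the technical core of the argument.
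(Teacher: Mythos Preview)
Your decomposition $G_F(u)-2=-\sum_{\ell\ge 0}F(2^\ell u)/2^\ell$ and the use of both bounds $|F|\le c_2$, $|F(t)|\le c_3 t$ are exactly what the paper does. Where you diverge is the order of operations: you integrate each $\min$ term in $u$ first and only then sum over $\ell$, which forces you to track the fractional part $\beta_0$ of $\log_2(c_2/(c_3 u_0))$ and worry about rounding. The paper instead bounds the integrand \emph{pointwise} before integrating: for each fixed $u$ it splits the $\ell$-sum at a real parameter $L$, obtaining $|G_F(u)-2|\le c_3(L+1)u+c_2\,2^{-L}$, and then simply plugs in the minimiser $2^L=c_2\log 2/(c_3 u)$. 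This yields directly
\[
\left|\frac{G_F(u)-2}{u}\right|\le \frac{c_3}{\log 2}\bigl(-\log u+\log\tfrac{c_2}{c_3}+\log\log 2+\log 2+1\bigr),
\]
and integrating $-\log u$ from $0$ to $u_0$ contributes the remaining $+1$, giving the stated $+2$.

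So the ``hard part'' you anticipate is an artefact of your ordering. The $\log\log 2$ is not produced by a delicate rounding analysis; it appears immediately once you optimise the split at the \emph{real} value $2^L=c_2\log 2/(c_3 u)$ (your $N^\ast$, up to the spurious factor of $2$) and only afterwards integrate. The paper does not address the fact that $L$ is used as a real number rather than an integer---this is the usual analytic-number-theory sleight of hand---so if you want a fully rigorous version you would indeed have to check that the integer rounding does not spoil the constant, but that is a separate refinement, not the core of the argument.
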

  \begin{proof}
 For $L\geq 0$, we write  
  \begin{equation*}
  \frac{G_{F}(u)-2}{u}=-\sum_{\ell \leq L}\frac{F(2^\ell  u)}{2^\ell }
  -\sum_{\ell \geq L+1}\frac{F(2^\ell  u)}{2^\ell}.
  \end{equation*} 
  Since $|F(t)|\leq c_2$  and $|F(t)|\leq c_3 t$, we get 
  \begin{equation*}
   \left|\sum_{\ell \geq 0}\frac{F(2^\ell  u)}{2^\ell }\right|
 \leq
   c_3(L+1)u+\frac{c_2}{2^L}.
   \end{equation*} 
  The best value of $L$ is given by $2^L=\frac{c_2\log 2}{c_3 u}.$
  Then, we find that 
 \begin{equation*}
   \left|\sum_{\ell \geq 0}\frac{F(2^\ell u)}{2^\ell }\right|
   \leq
  \frac{c_3 u}{\log 2}\left( 1+\log \tfrac{c_2}{c_3}+\log \log 2+\log 2-\log u\right).
 \end{equation*}
 This yields
 \begin{equation*}
 \int_{0}^{u_0 } \left|\frac{G_F(u)-2}{u}\right|\, du
 \leq 
 \frac{c_3u_0}{\log 2}\left( -\log u_0+\log \tfrac{c_2}{c_3}+\log \log 2+\log 2+2\right),
 \end{equation*}
 which completes the proof.
  \end{proof}
 Now, we record the following simple lemmas, but useful results to complete the proof of Theorem \ref{Thm2}. 
 \section{Preliminary lemmas}
  \begin{lem}
  \label{lem3}
  For any even integer $M\geq 1$, we have 
  \begin{equation*}
    \sum_{\substack{ m\leq M\\ (m,2)=1}}\frac{1}{m} 
   =
   \frac{1}{2}\left(\log M+\gamma+\log 2\right)+\frac{1}{12M^2}+\frac{2\Theta_M}{15M^4},
  \end{equation*}
  where $\Theta_M \in[-1,1/8]$.
  \end{lem}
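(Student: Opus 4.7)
The plan is to reduce the odd sum to the ordinary harmonic sum and then invoke Euler--Maclaurin. Writing $H_n=\sum_{k=1}^n 1/k$ and using that $M$ is even,
\[
\sum_{\substack{m\le M\\(m,2)=1}}\frac{1}{m} \;=\; H_M - \tfrac12 H_{M/2},
\]
so the problem reduces to a sharp expansion of $H_n$ at $n=M$ and $n=M/2$.

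For this I would apply the Euler--Maclaurin formula to $f(x)=1/x$ on $[1,n]$, using $B_2=\tfrac16$, $B_4=-\tfrac1{30}$ and $f^{(2j-1)}(x)=-(2j-1)!/x^{2j}$, carried through the term of order $n^{-4}$. Because $f$ is completely monotone, the classical theorem on Euler--Maclaurin truncation gives that the remainder inherits the sign of the first omitted term and is bounded by it in absolute value; thus, for every integer $n\ge 1$,
\[
H_n \;=\; \log n + \gamma + \frac{1}{2n} - \frac{1}{12\, n^2} + \frac{1}{120\, n^4} - \epsilon_n,\qquad 0\le \epsilon_n\le \frac{1}{252\, n^6}.
\]
The borderline case $n=1$ (used when $M=2$, so $M/2=1$) is checked by hand: $\epsilon_1 = 1 - \gamma - \tfrac12 + \tfrac1{12} - \tfrac1{120} \approx 0.00222 < \tfrac1{252}$.

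Substituting into $H_M-\tfrac12 H_{M/2}$ is mechanical. The $1/(2n)$ terms cancel, the logarithmic and constant pieces combine to $\tfrac12(\log M+\gamma+\log 2)$, the $n^{-2}$ pieces give $+\tfrac{1}{12M^2}$, and the $n^{-4}$ pieces give $-\tfrac{7}{120M^4}$. The leftover is $\Delta_M:=\tfrac12\epsilon_{M/2}-\epsilon_M$, which, using $\epsilon_{M/2}\le \tfrac{64}{252 M^6}$ and $\epsilon_M\le \tfrac{1}{252 M^6}$, satisfies $\Delta_M\in\bigl[-\tfrac{1}{252 M^6},\,\tfrac{32}{252 M^6}\bigr]$. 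Rewriting the total $M^{-4}$ contribution as $\tfrac{2\Theta_M}{15 M^4}$ yields $\Theta_M=-\tfrac{7}{16}+\tfrac{15 M^4}{2}\Delta_M$, and the elementary estimate $\bigl|\tfrac{15M^4}{2}\Delta_M\bigr|\le \tfrac{20}{21M^2}$ shows that for $M\ge 2$ this correction lies in $[-\tfrac{5}{672},\tfrac{5}{21}]$, safely within the slack $\tfrac{9}{16}$ available at each of the endpoints $\Theta_M=-1$ and $\Theta_M=\tfrac18$.

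The argument is almost entirely bookkeeping, and the only genuine obstacle is securing the signed error bound $0\le \epsilon_n\le 1/(252 n^6)$ uniformly down to $n=1$. I would specifically invoke complete monotonicity of $f(x)=1/x$ rather than a cruder two-sided bound $|\epsilon_n|\le C/n^6$, because the asymmetric interval $[-1,\tfrac18]$ for $\Theta_M$ depends on the asymmetric shape of the range $[-\tfrac{1}{252M^6},\tfrac{32}{252M^6}]$ for $\Delta_M$, which is lost if one throws away the sign information in the Euler--Maclaurin remainder.
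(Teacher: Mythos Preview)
Your proposal is correct and follows the same route as the paper: write the odd sum as $H_M-\tfrac12 H_{M/2}$ and expand $H_n$ by Euler--Maclaurin with the signed remainder bound. The paper simply truncates one step earlier, using $H_n=\log n+\gamma+\tfrac{1}{2n}-\tfrac{1}{12n^2}+\tfrac{\theta_n}{60n^4}$ with $\theta_n\in[0,1]$, so that substitution gives $\Theta_M=(\theta_M-8\theta_{M/2})/8\in[-1,1/8]$ directly, without the extra $n^{-4}$ term and the subsequent bookkeeping you carry out.
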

  \begin{proof}
  This follows immediately from 
  \begin{equation*}
  \sum_{m\leq M}\frac{1}{m}-\log M=\gamma+\frac{1}{2M}-\frac{1}{12M^2}+\frac{\theta_M}{60M^4},
  \end{equation*}
   where $\gamma$ is the Euler constant and $\theta_{M}\in [0,1]$.
  \end{proof}
  \begin{lem}
  \label{lem4}
  For $M\geq 1$, we have 
 \begin{equation*}
 \sum_{\substack{ m\geq M \\ (m,2)=1}}\frac{1}{m^3}=\frac{1}{4M^2}+{\Ocal}^*\left(\frac{1}{2M^3}\right)
 \end{equation*}  
 and 
 \begin{equation*}
 \sum_{\substack{ m\geq M\\ (m,2)=1}}\frac{1}{m^4}=\frac{1}{6M^3}+{\Ocal}^*\left(\frac{1}{2M^4}\right)
 \end{equation*}
 \end{lem}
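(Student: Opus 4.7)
The plan is to use the standard comparison of a sum with an integral, sharpened by the convexity of $t\mapsto t^{-s}$. Both identities are of the common form
\[
\sum_{\substack{m\ge M\\(m,2)=1}}\frac{1}{m^s} \;=\; \frac{1}{2(s-1)M^{s-1}} + \Ocal^{*}\!\bigl(\tfrac{1}{2M^{s}}\bigr)
\qquad (s=3,4),
\]
the main term being $\tfrac12\int_M^\infty t^{-s}\,dt$, reflecting the density $\tfrac12$ of odd integers in~$\mathbb{N}$.

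First I would reindex by writing the odd integers $\ge M$ as $M_0+2k$ with $k\ge 0$ and $M_0$ the least odd integer $\ge M$; in the applications of this lemma $M$ is even (as in Lemma~\ref{lem3}), so $M_0=M+1$. Setting $f(x)=(M_0+2x)^{-s}$, which is positive, decreasing and convex on $[0,\infty)$, the series becomes $\sum_{k\ge 0}f(k)$ with $\int_0^\infty f(x)\,dx = \frac{1}{2(s-1)M_0^{s-1}}$. The standard two-sided inequality
\[
\int_0^\infty f(x)\,dx + \tfrac12 f(0) \;\le\; \sum_{k\ge 0}f(k) \;\le\; \int_0^\infty f(x)\,dx + f(0),
\]
whose lower half is the trapezoidal inequality for convex $f$ and whose upper half is the trivial estimate for decreasing $f$, then gives
\[
\frac{1}{2M_0^{s}} \;\le\; \sum_{\substack{m\ge M\\(m,2)=1}}\frac{1}{m^{s}} - \frac{1}{2(s-1)M_0^{s-1}} \;\le\; \frac{1}{M_0^{s}}.
\]

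To complete the proof I would shift from $M_0=M+1$ back to $M$ in the main term. Since $\frac{1}{M^{s-1}}-\frac{1}{(M+1)^{s-1}} = (s-1)\int_M^{M+1} t^{-s}\,dt$ lies in $\bigl[\tfrac{s-1}{(M+1)^s},\tfrac{s-1}{M^s}\bigr]$, the correction $\frac{1}{2(s-1)M_0^{s-1}}-\frac{1}{2(s-1)M^{s-1}}$ lies in $\bigl[-\tfrac{1}{2M^s},-\tfrac{1}{2(M+1)^s}\bigr]$. Combining this negative correction with the bracket above yields
\[
\sum_{\substack{m\ge M\\(m,2)=1}}\frac{1}{m^{s}} - \frac{1}{2(s-1)M^{s-1}} \;\in\; \Bigl[-\tfrac{1}{2M^s},\,\tfrac{1}{2(M+1)^s}\Bigr] \;\subset\; \Bigl[-\tfrac{1}{2M^s},\,\tfrac{1}{2M^s}\Bigr],
\]
which is precisely the $\Ocal^{*}(1/(2M^s))$ claim for both $s=3$ and $s=4$. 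I do not anticipate any real difficulty: the argument is routine bookkeeping, and the (implicit) assumption that $M$ is even is exactly what produces the sign cancellation needed to keep the error inside $\tfrac{1}{2M^s}$ rather than the weaker $\tfrac{1}{M^s}$ one would obtain directly for odd~$M_0=M$.
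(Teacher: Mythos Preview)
Your approach via the trapezoidal/convexity inequality differs from the paper's, which instead writes $m^{-s}=s\int_m^\infty t^{-s-1}\,dt$, interchanges sum and integral, replaces the count of odd integers in $[M,t]$ by $\tfrac12([t]-[M])$, and then bounds the two resulting fractional-part integrals by $\tfrac{1}{2M^s}$ each (with opposite signs). Your route is cleaner and makes it explicit where the factor $\tfrac12$ comes from.

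There is, however, a gap in your parity assumption. In the proof of Theorem~2 the paper sets $M=[\delta^{-1}]$ in the case where this is \emph{odd} (and $M=[\delta^{-1}-1]$, again odd, in the complementary case), precisely so that the sums over odd $m\ge M$ begin at $m=M$. Thus the application has $M$ odd, not even as you assert. For $M$ odd your own bracket reads
\[
\sum_{\substack{m\ge M\\(m,2)=1}}\frac{1}{m^{s}}-\frac{1}{2(s-1)M^{s-1}}\in\Bigl[\tfrac{1}{2M^s},\,\tfrac{1}{M^s}\Bigr],
\]
which yields only $\Ocal^{*}(1/M^s)$; and indeed the constant $\tfrac12$ in the lemma is too optimistic for odd $M$ (for instance at $M=5$, $s=3$ the sum is $\approx 0.01475$, whereas $\tfrac{1}{4M^2}+\tfrac{1}{2M^3}=0.014$). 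So what you have detected is a small inaccuracy in the paper's statement rather than a defect of your method. The slip is harmless downstream: only the upper bound is used in the estimate for $K_M(\delta)$, and replacing $\tfrac{1}{2M^s}$ by $\tfrac{1}{M^s}$ merely doubles the already negligible $\delta^2$-coefficient in $H_1(\delta)$.
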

  \begin{proof}
 Notice that 
 \begin{eqnarray*}
   \sum_{\substack{ m\geq M\\(m,2)=1}}\frac{1}{m^3}
 &=& 
     3\sum_{\substack{ m\geq M\\(m,2)=1}}\int_{m}^{\infty}\frac{dt}{t^4}
 =
     3\int_{M}^{\infty}\sum_{\substack{ M\leq m\leq t\\(m,2)=1}}1 \, \frac{dt}{t^4}
 =
      \frac{3}{2}\int_{M}^{\infty}\frac{[t]-[M]}{t^4}\, dt
 \\&=&
     \frac{3}{2}\left(\int_{M}^{\infty}\frac{dt}{t^3}
     -\int_{M}^{\infty}\frac{\{t\}}{t^4}\, dt
     -M\int_{M}^{\infty}\frac{dt}{t^4}+\{M\}\int_{M}^{\infty}\frac{dt}{t^4}\right)
 \end{eqnarray*}
 As usual, $\{x\}$ and $[x]$ denote the fractional and the integer part of real $x$. Since $0\leq \{x\}<1$, the first equality follows from 
 \begin{equation*}
 0\leq \int_{M}^{\infty}\frac{\{t\}}{t^4}\, dt
 \leq 
 \frac{1}{3M^3}, \qquad
 0\leq \int_{M}^{\infty}\frac{\{M\}}{t^4}\, dt 
 \leq \frac{1}{3M^3}.
 \end{equation*}
 By a similar argument, we prove the second equality of this Lemma.
  \end{proof}
 \section{{Proofs}}
 
 \subsection{Proof of Theorem \ref{Thm2}}
 First, writing $n$ in the form $2^\ell m$ where $m$ is odd integer and recalling the suitable condition $\chi(2)=1$. We have 
  \begin{equation}
  \label{eqthm1}
 \sum_{n\geq 1}\chi(n)\frac{1-F(\delta n)}{n}
 =
 \sum_{\substack{m\geq 1\\ (m,2)=1}}\frac{\chi(m)}{m}\sum_{\ell\geq 0}\frac{1-F(\delta 2^{\ell}m)}{2^{\ell}}=
 \sum_{\substack{ m\geq 1\\ (m,2)=1}} \frac{\chi(m)}{m}G_{F}(\delta m),
  \end{equation}
  where the function $F$ satisfies our hypotheses.
 Now, we compare this sum to an integral. Here we need to consider two cases
 according to the parity of $[\delta^{-1}]$. 
 \begin{itemize}
 \item[(1)] The first case when $[\delta^{-1}]$ is odd in which case we set
   $M=[\delta^{-1}]$.
 \item[(2)] In the second case, $[\delta^{-1}]$ is even and we set $M=[\delta^{-1}-1]$.
  \end{itemize}
 Both treatments are very similar, so we give the details only in the first case.
 We thus assume that $[\delta^{-1}]$ is odd and set $M=[\delta^{-1}]$. We have
 \begin{equation}
 \label{eqthm2}
 \delta \sum_{\substack {m\geq M\\ (m,2)=1}}\frac{|G_{F}(\delta m)|}{\delta m}
  =
 \frac{1}{2}\int_{\delta M}^{\infty}\frac{|G_{F}(t)|}{t}\, dt 
  -
 \frac{1}{2}\sum_{\substack{ m\geq M\\ (m,2)=1}}\int_{\delta m}^{\delta (m+2)}\left(\frac{|G_{F}(t)|}{t}-\frac{|G_{F}(\delta m)|}{\delta m}\right) \, dt.
 \end{equation}
 Concerning the inner integral on the far right-hand side above, we have
 \begin{equation*}
 \left|\frac{|G_{F}(\delta m)|}{\delta m}-\frac{|G_{F}(t)|}{t}\right| 
 \leq  
 (t-\delta m)\max_{ \delta m\leq u }\left|\frac{d}{du} \frac{G_F(u)}{u}\right|.
 \end{equation*}
 Define
 \begin{equation*}
 K_M(\delta)=\sum_{\substack{ m\geq M\\ (m,2)=1}}\int_{\delta m}^{\delta (m+2)}\left|\frac{|G_{F}(t)|}{t}-\frac{|G_{F}(\delta m)|}{\delta m}\right| \, dt.
 \end{equation*}
 Applying Lemma \ref{lem1} and using the identity
 $
 \int_{\delta m}^{\delta(m+2)}(t-\delta m)\, dt = 2\delta ^2$,
 we get 
 \begin{eqnarray*}
 K_M(\delta) 
 &\leq& 
 \sum_{\substack{ m\geq M\\ (m,2)=1}}\int_{\delta m}^{\delta (m+2)}\left(\frac{8c_0}{7\delta^4 m^4}+\frac{4c_1}{3\delta ^3 m^3}\right)(t-\delta m)\, dt
 \\&\leq&
 \frac{16c_0}{7\delta^2}\sum_{\substack{ m\geq M\\ (m,2)=1}}\frac{1}{m^4}
 +
 \frac{8c_1}{3\delta}\sum_{\substack{ m\geq M\\ (m,2)=1}}\frac{1}{m^3}.
 \end{eqnarray*}
 Applying Lemma \ref{lem4} to the sum on the right-hand side above, we find that
 \begin{equation*}
 \frac{1}{2}K_M(\delta)
 \leq 
 \frac{8c_0}{7\delta^2}\left(\frac{1}{6M^3}+\frac{1}{2M^4}\right)
 +\frac{4c_1}{3\delta }\left(\frac{1}{4M^2}+\frac{1}{2M^3}\right). 
 \end{equation*}
 Recall that $M=[\delta ^{-1}]$, Eq. \eqref{eqthm2} becomes
 \begin{equation}
 \label{eqthm4}
          \sum_{\substack{m\geq M\\ (m,2)=1}}\frac{|G_{F}(\delta m)|}{m}
  \leq
         \frac{1}{2}\int_{1}^{\infty}\frac{|G_{F}(t)|}{t}\, dt +H_1(\delta),
 \end{equation}
 where 
 \begin{equation}
 \label{eqe7}
 H_1(\delta)= \frac{(4c_0+7c_1)\delta}{21}+\frac{(12c_0+14c_1)\delta ^{2}}{21}
 \end{equation}
 Now, let $ u_0\leq \delta m\leq 1$ for the remaining $m$'s, we notice that $G_F(\delta m)\geq 0$. Here, we distinguish two cases.
 \begin{itemize}
 \item[(a)] \textbf{Assume that $u_0/\delta<1 $}. We have
 \begin{equation}
 \label{eqthm71}
         \sum_{\substack{ 1\leq m\leq M-2\\(m,2)=1}}\frac{|G_{F}(\delta m)|}{ m}
  =
  \sum_{\substack{ 1 \leq m\leq M-2\\ (m,2)=1}}\frac{G_{F}(\delta m)-2}{m}
   +\sum_{\substack{ 1\leq m\leq M-2\\ (m,2)=1}}\frac{2}{m}.
 \end{equation}
 We write the first sum on the right-hand side above as
 \begin{multline*}
     \sum_{\substack{ 1\leq m\leq M-2\\ (m,2)=1}}\frac{G_{F}(\delta m)-2}{m} 
 =
     \frac{1}{2}\int_{\delta}^{\delta M}\frac{G_{F}(t)-2}{t}\, dt \\
     -\frac{1}{2}\sum_{\substack{1\leq m\leq M-2\\ (m,2)=1}}\int_{\delta m}^{\delta (m+2)}\left(\frac{G_{F}(t)-2}{t}-\frac{G_{F}(\delta m)-2}{\delta m}\right) \, dt.
 \end{multline*}
 Since the derivative of $(G_F(u)-2)/u$ is positive on $u_0\leq u\leq 1$ and $u_0<\delta$. Then the derivative $(G_F(u)-2)/u$ is also positive on $\delta\leq u\leq 1$, it follows that the last sum above is positive. Applying Lemma \ref{lem3} to the last sum on the right-hand side of Eq. \eqref{eqthm71}, we get 
 \begin{multline*}
         \sum_{\substack{ 1\leq m\leq M-2\\(m,2)=1}}\frac{|G_{F}(\delta m)|}{ m}
  \leq
          \frac{1}{2}\int_{\delta }^{\delta M}\frac{G_{F}(t)-2}{t}\, dt +\\
         \log (M-1)
         +\gamma +\log (2)
         +\frac{1}{6(M-1)^2}+\frac{1}{30(M-1)^4}.
 \end{multline*}
 Recalling that $M=[\delta ^{-1}]$, then
 \begin{equation}
 \label{eqthm81}
         \sum_{\substack{ 1\leq m\leq M-2\\(m,2)=1}}\frac{|G_{F}(\delta m)|}{ m}
 \leq  
          \frac{1}{2}\int_{\delta}^{1}\frac{G_{F}(t)-2}{t}\, dt
         -\log \delta +\gamma +\log 2 +H_2(\delta),
 \end{equation}
 where
 \begin{equation}
 \label{eqe8}
 H_2(\delta)=\log(1-\delta)+\frac{\delta^2}{6(1-2\delta)^2}+\frac{\delta^4}{30(1-2\delta)^4}.
 \end{equation}
 From Eqs. \eqref{eqthm4} and \eqref{eqthm81}, we get
 \begin{multline}
 \label{equ86}
 \sum_{\substack{ m\geq 1\\ (m,2)=1}}\frac{\left|G_{F}(\delta m)\right|}{m} 
 \leq 
 \frac{1}{2}\int_{\delta}^{1}\frac{G_{F}(t)-2}{t}\, dt
 +\frac{1}{2}\int_{1}^{\infty}\frac{|G_{F}(t)|}{t}\, dt \\
 -\log \delta +\gamma +\log 2+H_1(\delta)+H_2(\delta).
 \end{multline}
 Now, we write the first integral on the right-hand side above as 
 \begin{equation*}
 \int_{\delta}^{1}\frac{G_{F}(t)-2}{t}\, dt
 =
 \int_{0}^{1}\frac{G_{F}(t)-2}{t}\, dt
 -
 \int_{0}^{\delta}\frac{G_{F}(t)-2}{t}\, dt. 
 \end{equation*}
 Applying Lemma \ref{lem2} to the last integral above. Eq. \eqref{equ86} becomes
 \begin{equation}
 \label{equ87}
 \sum_{\substack{ m\geq 1\\ (m,2)=1}}\frac{\left|G_{F}(\delta m)\right|}{m} 
 \leq 
 -\log \delta +b_F
 +D(\delta)+H_1(\delta)+H_2(\delta),
 \end{equation}
 where the constant $b_F$ depends on only $F$ and is given by Eq. \eqref{eqe3} and $D(\delta)$ is given by Eq. \eqref{eq11}.
 \item[(b)] \textbf{Assume that $u_0/\delta \geq 1$.} We have
 \begin{equation*}
 \sum_{\substack{ 1\leq m\leq M-2\\(m,2)=1}}\frac{|G_{F}(\delta m)|}{ m}
   \leq   
 \sum_{\substack{ 1\leq m< \left[\frac{u_0}{\delta}\right]\\ (m,2)=1}}\frac{\left|G_{F}(\delta m)\right|}{ m}
  +\sum_{\substack{ \left[\frac{u_0}{\delta}\right] \leq m\leq M-2\\ (m,2)=1}}\frac{G_{F}(\delta m)}{m}.
 \end{equation*}
 Under the condition $\left|G_{F}(u)\right|\leq 2$, we obtain that 
 \begin{equation}
 \label{eqe5}
 \sum_{\substack{ 1\leq m\leq M-2\\(m,2)=1}}\frac{|G_{F}(\delta m)|}{ m}
 \leq 
 \sum_{\substack{ 1\leq m\leq M-2\\ (m,2)=1}}\frac{2}{m}+
 \sum_{\substack{\left[\frac{u_0}{\delta}\right]  \leq m\leq M-2\\ (m,2)=1}}\frac{G_{F}(\delta m)-2}{m}. 
 \end{equation}
 We write the second sum on the right-hand side above as 
 \begin{multline*}
     \sum_{\substack{ \left[\frac{u_0}{\delta}\right] \leq m\leq M-2\\ (m,2)=1}}\frac{G_{F}(\delta m)-2}{m} 
 =
     \frac{1}{2}\int_{\delta \left[\frac{u_0}{\delta}\right] }^{\delta M}\frac{G_{F}(t)-2}{t}\, dt
     \\
     -\frac{1}{2}\sum_{\substack{ \left[\frac{u_0}{\delta}\right] \leq m\leq M-2\\ (m,2)=1}}\int_{\delta m}^{\delta (m+2)}\left(\frac{G_{F}(t)-2}{t}-\frac{G_{F}(\delta m)-2}{\delta m}\right) \, dt.
 \end{multline*}
 Since the derivative of $(G_F(u)-2)/u$ is positive on the interval $u_0\leq u\leq 1$, it follows that the last sum above is positive. Applying Lemma \ref{lem5} to the first sum on the right-hand side of Eq. \eqref{eqe5}, we get
 \begin{multline*}
         \sum_{\substack{ 1\leq m\leq M-2\\(m,2)=1}}\frac{|G_{F}(\delta m)|}{ m}
  \leq
          \frac{1}{2}\int_{\delta \left[\frac{u_0}{\delta}\right] }^{\delta M}\frac{G_{F}(t)-2}{t}\, dt \\
         +\log (M-1)
         +\gamma +\log (2)
         +\frac{1}{6(M-1)^2}+\frac{1}{30(M-1)^4}
 \end{multline*}
 Recalling that $M=[\delta ^{-1}]$, then
 \begin{equation}
 \label{eqthm8}
         \sum_{\substack{ 1\leq m\leq M-2\\(m,2)=1}}\frac{|G_{F}(\delta m)|}{ m}
  \leq 
          \frac{1}{2}\int_{\delta \left[\frac{u_0}{\delta}\right] }^{1}\frac{G_{F}(t)-2}{t}\, dt
         -\log \delta 
         +\gamma +\log 2+H_2(\delta)
 \end{equation}
 where $H_2(\delta)$ is given by Eq. \eqref{eqe8}.
 From Eqs. \eqref{eqthm4} and \eqref{eqthm8}, we get
 \begin{multline*}
 \sum_{\substack{ m\geq 1\\ (m,2)=1}}\frac{\left|G_{F}(\delta m)\right|}{m} 
 \leq 
 \frac{1}{2}\int_{\delta \left[\frac{u_0}{\delta}\right] }^{1}\frac{G_{F}(t)-2}{t}\, dt
 +\frac{1}{2}\int_{1}^{\infty}\frac{|G_{F}(t)|}{t}\, dt
 \\
 -\log \delta +\gamma +\log 2+H_1(\delta)+H_2(\delta).
 \end{multline*}
 But, the first integral above is 
 \begin{equation*}
 \frac{1}{2}\int_{\delta \left[\frac{u_0}{\delta}\right] }^{1}\frac{G_{F}(t)-2}{t}\, dt
 =
 \frac{1}{2}\int_{0}^{1}\frac{G_{F}(t)-2}{t}\, dt
 -
 \frac{1}{2}\int_{0}^{\delta \left[\frac{u_0}{\delta}\right] }\frac{G_{F}(t)-2}{t}\, dt.
 \end{equation*}
 Thanks to Lemma \ref{lem2}, we find that
 \begin{equation}
 \label{equ88}
 \sum_{\substack{ m\geq 1\\ (m,2)=1}}\frac{\left|G_{F}(\delta m)\right|}{m} 
 \leq 
 -\log \delta +b_F+D(u_0)+H_1(\delta)+H_2(\delta).
 \end{equation}
 where $D(u_0)$ is given by Eq. \eqref{eq11}.
 \bigskip
 \end{itemize}
 Form Eqs. \eqref{equ87} and \eqref{equ88}, we conclude that
 \begin{equation*}
 \sum_{\substack{ m\geq 1\\ (m,2)=1}}\frac{\left|G_{F}(\delta m)\right|}{m} 
 \leq 
 -\log \delta +b_F+D(\theta)+ H(\delta),
 \end{equation*}
 where $H(\delta)= H_1(\delta)+ H_2(\delta)$ and $\theta$ is $ u_0$ or $\delta$ according to whether $\delta \leq u_0$ or $\delta >u_0.$
 
 \subsection{Proof of Theorem~\ref{Thm1}}
 Setting $n=2^{\ell} m$ with $m$ is odd in Eq. \eqref{eq13}, we have
 \begin{equation}
 \label{thm31}
  L(1,\chi)=\sum_{\substack{ m\geq 1\\(m,2)=1}}\chi (m)\frac{G_{F_{1}}(\delta m)}{m}
               -\frac{2\tau (\chi)}{q}\sum\limits_{1\leq k\leq \delta q/2}\overline{\chi}(k)\log \left|\sin \frac{\pi k}{\delta q}\right|.
 \end{equation}
 From Lemmas~\ref{lem8}, \ref{lem6} and \ref{lem7}, we find that the hypotheses of Theorem \ref{Thm2} are satisfied when $u_0=10^{-5}$. Then, we apply this last Theorem to the first sum in Eq.~\eqref{thm31}. Using Lemma~\ref{ramare3} for the second sum of Eq. \eqref{thm31}, we get
  \begin{equation*}
     |L(1, \chi)|
 \leq  
     -\log \delta+b_{F_1}+\delta \sqrt{q} \log 2+D(\theta)+H(\delta)
 \end{equation*}
 where $\theta$ is $10^{-5} $ or $\delta$ according to whether $\delta \leq 10^{-5}$ or $\delta >10^{-5}$. 
 
 Thanks to Lemmas \ref{lemm65}, \ref{lemm66} and \ref{lem5}, we find that $c_0=1$, $c_1=1/\pi$, $c_2=2$, $c_3=\log 4$ and $b_{F_1}  \leq -0.66266$. The best value for $\delta$ is given by $\delta \sqrt{q} \log 2=1$. Thus, we get
   \begin{equation*}
      |L(1, \chi)|
  \leq  
      \tfrac{1}{2}\log q+\log \log 2-0.66266+1+C(q),
  \end{equation*}
  where the constant $C(q)$ depends only on $q$ and 
  \begin{multline*}
  C(q)=\theta\left( -\log \theta+\log 2+2\right)
       +\log \left(1-\frac{1}{\sqrt{q}\log 2}\right)
       +\frac{(4\pi+7)}{21\pi\sqrt{q}\log 2}\\
       +\frac{(12\pi+14)}{21\pi q \log^2 2}
       +\frac{1}{6(\sqrt{q}\log 2-4)^2}
       +\frac{1}{30(\sqrt{q}\log 2-4)^4}.
  \end{multline*}
 Here, we distinguish the following cases:
 \begin{itemize}
 \item For $2\leq q\leq 2\cdot 10^{6}$. Using the same techniques as described in~\cite{Platt-Saad}, Platt computed $L(1,\chi)$ for all primitive $\chi$ where $\chi(2)=1$ with modulus less than $2\, 000\, 000$. No counter examples to Theorem~\ref{Thm1} were found, the nearest miss being at $q=241$ where $\left| L(1,\chi)\right| - \frac{1}{2}\log q \in [-0.3240421,-0.3240420]$. Figure~\ref{fig1} shows, for each $q$, the largest value of $\left| L(1,\chi)\right| - \frac{1}{2}\log q$ found, and Figure~\ref{fig2} shows the smallest.
 \item For $2\cdot 10^6 \leq q \leq 2\cdot 10^{10}$, we find that $\theta=\delta=1/(\sqrt{q}\log 2)$. It follows that 
 $$C(q)+\log \log 2-0.66266+1 \leq -0.02012.$$ 
 \item For $q\geq 2\cdot 10^{10}$, we find that $\theta\leq 1.02014 \cdot 10^{-5}$. It follows that $$C(q)+\log \log 2-0.66266+1 \leq -0.02012.$$ 
 \end{itemize} 
 Therefore, we conclude that
  \begin{equation*}
 |L(1, \chi)|\leq \tfrac 12\log q-0.02012. 
 \end{equation*}
  This completes the proof.
 \begin{figure}
 \centering
 \includegraphics[width=11cm]{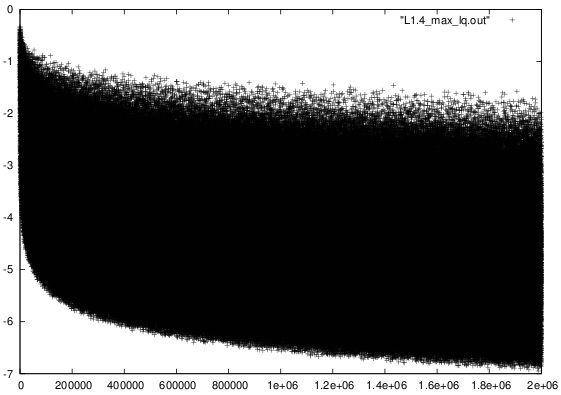}
 \caption{The largest value of $|L(1, \chi)|-\tfrac{1}{2}\log q$.}
 \label{fig1}
 \end{figure}
  \begin{figure}
   \centering
   \includegraphics[width=11.5cm]{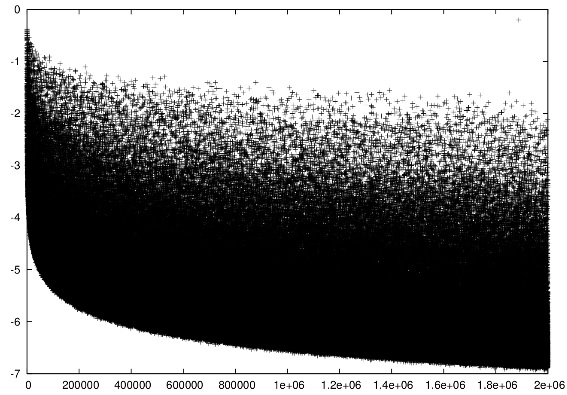}
   \caption{The smallest value of $|L(1, \chi)|-\tfrac{1}{2}\log q$.}
   \label{fig2}
   \end{figure}
   
 \begin{rem}
 This result is the best for the function $F_1$ for the moment, but can be optimized through applying this method on a different function, where the efficiency of our method will be much clear.
 \end{rem}
 \subsection{Proof of Theorem \ref{Thm3} }
 Recall that the Dirichlet class number formula is given by 
 \begin{equation*}
 h\left(\mathbb{Q}(\sqrt{q})\right)= \frac{\sqrt{q}}{2\log \varepsilon_q}L\left(1, \chi_q\right)
 \end{equation*}
 where $\chi_q$ is the even primitive real character modulo $q$ and $\varepsilon_q$ is the fundamental unit. Thanks to Theorem \ref{Thm1}, we find that
 \begin{equation*}
 \frac{2}{\sqrt{q}} h\left(\mathbb{Q}\left(\sqrt{q}\right) \right)
 \leq 
 \frac{\log q-0.04038}{2\log \left(\frac{1}{2}\sqrt{q-4}+\frac{1}{2}\sqrt{q}\right)}
 \leq 
 \frac{\log q-0.04038}{2\log \sqrt{q} +2\log \left(\frac{1}{2}+\frac{1}{2}\sqrt{1-\frac{4}{q}}\right)}.
 \end{equation*}
 For $q\geq 4$, we notice that 
 \begin{equation*}
 \log \left(\tfrac{1}{2}+\tfrac{1}{2}\sqrt{1-\tfrac{4}{q}}\right)=\log \left(1-\frac{2}{q\left(1+\sqrt{1-\tfrac{4}{q}}\right)}\right)\geq \log \left(1-\tfrac{2}{q}\right).
 \end{equation*}
 This yields 
 \begin{equation*}
 \frac{2}{\sqrt{q}} h\left(\mathbb{Q}\left(\sqrt{q}\right) \right)
 \leq 
 \frac{\log q-0.04038}{\log q +2\log \left(1-\frac{2}{q}\right)}.
 \end{equation*}
 Since $\log (1-X) \geq -2X$, when $ X\leq \frac{1}{2}$. It follows that
 \begin{equation*}
 \frac{2}{\sqrt{q}} h\left(\mathbb{Q}\left(\sqrt{q}\right) \right)
 \leq  
 1-\frac{2\log \left(1-\frac{2}{q}\right)+0.04038}{\log q +2\log \left(1-\frac{2}{q}\right)}
 \leq 
 1-\frac{0.04038-\frac{8}{q}}{\log q }
 \end{equation*}
 Oriat~\cite{Oriat1988} has computed the class number of this field when $1<q<24572$. For $q\geq 24572$, we conclude that 
 \begin{equation*}
 \frac{2}{\sqrt{q}} h\left(\mathbb{Q}\left(\sqrt{q}\right) \right)
 \leq 
 1-\frac{1}{25\log q}.
 \end{equation*}
 We extend it to $q>226$ via the table of Oriat.  Thanks to the precious remarks of Francesco Pappalardi, and the following simple GP-PARI code sent by him. We can check that our result is also correct for $1<q\leq 226.$
 \begin{verbatim}
 for(q=2,226,
   if(q==quaddisc(q),U=quadclassunit(q)[1];
      if(U>sqrt(q)/2*(1-1/25/log(q)), print(q" "U),),))
 \end{verbatim}
 This completes the proof.

\mytitle{Sumaia Saad Eddin}\\
Institute of Financial Mathematics\\
and Applied Number Theory,\\ Johannes Kepler University Linz,\\ Altenbergerstrasse 69, 4040 Linz, Austria.\\
{sumaia.saad\_eddin@jku.at}

 \end{document}